\newtheorem{lem}{Lemma}
\newtheorem{thm}[lem]{Theorem}
\newtheorem{cor}[lem]{Corollary}
\newtheorem{clm}{Claim}
\newtheorem{con}[lem]{Conjecture}
\begin{document}
\title{ On regular 2-path Hamiltonian graphs }
\author{Xia Li$^a$, \quad Weihua Yang$^a$\footnote{Corresponding author. E-mail: ywh222@163.com; yangweihua@tyut.edu.cn
(W.~Yang).}, \quad Bo Zhang$^b$,\quad Shuang Zhao$^a$ \\
\small $^a$Department of Mathematics,  Taiyuan University of
Technology,  Taiyuan 030024,  China\\
\small $^b$Department of Mathematics,  Shanxi Normal University, Jinzhong 030600,  China}
\maketitle {\flushleft\bf Abstract:} {\small Kronk introduced the $l$-path hamiltonianicity of graphs in 1969. A graph is $l$-path Hamiltonian if every path of length not exceeding $l$ is contained in a Hamiltonian cycle. We have shown  that if $P=uvz$ is a 2-path of a 2-connected, $k$-regular graph on at most $2k$ vertices and $G - V(P)$ is connected, then there must exist a Hamiltonian cycle in $G$ that contains the 2-path $P$.
In this paper, we characterize a class of graphs that illustrate the sharpness of the bound $2k$. Additionally, we show that by excluding the class of graphs, both 2-connected, $k$-regular graphs on at most $2k + 1$ vertices and 3-connected, $k$-regular graphs on at most $3k-6$ vertices satisfy that there is a Hamiltonian cycle containing the 2-path $P$ if $G\setminus V(P)$ is connected.
 }

 {\flushleft\bf Keywords}:
Hamiltonian cycle; $l$-path Hamiltonian; $k$-regular graph

\section{Introduction}

A \emph{Hamiltonian path $($cycle$)$} in a graph $G$ is a path (cycle) containing all the vertices of $G$, and a graph with a Hamiltonian cycle is called
\emph{Hamiltonian}. A graph is \emph{Hamilton-connected} when every pair of
distinct vertices is connected by a Hamiltonian path.
The existence of Hamiltonian cycles in 2-connected, $k$-regular graphs has been the subject of research in several publications~\cite{EH,BH,J,ZLY,BK,L}. The findings of these studies indicate that all 2-connected, $k$-regular graphs on at most $3k + 4$ vertices, except for two kinds of graphs which is not 3-connected, are Hamiltonian. In 1976, H\"{a}ggkvist proposed the following conjecture .

\begin{con}[\cite{H}]\label{thm7}
For $k\geqslant 4$, every $m$-connected, $k$-regular graph on at most $(m+1)k$ vertices is Hamiltonian.
\end{con}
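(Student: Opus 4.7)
The plan is to argue by contradiction: assume $G$ is $m$-connected, $k$-regular with $n \leqslant (m+1)k$ and $k \geqslant 4$ but not Hamiltonian. My first attempt would be the Chv\'atal--Erd\H{o}s route, which reduces the conjecture to showing $\alpha(G) \leqslant m$. Suppose $I = \{v_0, \dots, v_m\}$ were an independent set of size $m+1$; then $k$-regularity gives $\sum_i |N(v_i)| = (m+1)k$, and since $|V(G) \setminus I| \leqslant (m+1)k - (m+1) = (m+1)(k-1)$, the neighborhood slots are nearly saturated. I would combine this double-counting with the $m$-connectivity (via Menger's theorem producing $m$ internally disjoint $v_i$--$v_j$ paths) to pin down a rigid local structure around $I$ and force a contradiction.

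If the Chv\'atal--Erd\H{o}s reduction falls short (which is likely, since sharp examples tend to have $\alpha(G) = m+1$), the backup is the classical longest-cycle analysis. Let $C$ be a longest cycle of $G$, oriented, and let $H$ be a component of $G - V(C)$. The $m$-connectivity supplies an attachment set $A \subseteq V(C) \cap N(H)$ with $|A| \geqslant m$, and the maximality of $C$ forces the successor set $A^+ = \{a^+ : a \in A\}$ to be independent and disjoint from $N(H)$ (the P\'osa rotation argument). Iterating rotations along $C$ should produce several disjoint ``shadow'' independent sets in $V(C)$, yielding $|V(C)| \geqslant c\,|A|$ with $c$ growing in $k$; coupling this with $|H| \geqslant k - |A|$ from $k$-regularity on the $H$-side aims to push $n$ past $(m+1)k$, contradicting the hypothesis.

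The main obstacle is exactly where the conjecture has resisted proof for nearly fifty years: converting the local rotation constraints into a clean global count that matches the $(m+1)k$ threshold. I expect the argument to buckle when $H$ is small (a single vertex or a tightly attached clique), since the counting slack collapses and one is forced into case analysis reminiscent of the sharp configurations characterized in the authors' $2k$-vertex theorem. A promising workaround is to feed the authors' own machinery back in: choose a 2-path $P = uvz$ inside $H \cup N_C(H)$ and apply their 2-path Hamiltonian theorem to a suitable induced subgraph, then splice the resulting cycle into $C$. Making this splice work uniformly in $m$ and $k$ is, I expect, where the real difficulty will lie.
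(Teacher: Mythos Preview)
The statement you are attempting to prove is a \emph{conjecture}, not a theorem, and the paper does not provide a proof of it. More importantly, the paper explicitly records (immediately after stating Conjecture~\ref{thm7}) that the conjecture is \emph{false} for $m\geqslant 4$: counterexamples were constructed independently by Jackson and Jung (see the reference to~\cite{Bill}). Only the case $m=3$ remains open, and that is restated separately as Conjecture~\ref{thm6}. So there is no ``paper's own proof'' to compare against, and any general proof attempt must fail.

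Concretely, this means your proposal has an unavoidable gap. The Chv\'atal--Erd\H{o}s route cannot succeed because the Jackson--Jung graphs are $m$-connected, $k$-regular on at most $(m+1)k$ vertices yet non-Hamiltonian, so in particular they satisfy $\alpha(G)\geqslant m+1$ and the double-counting you sketch will not force a contradiction. The longest-cycle/rotation approach likewise cannot close, for the same reason: the counterexamples sit exactly where you anticipate the argument ``buckling,'' and no amount of case analysis or splicing of the authors' $2$-path machinery can rescue a statement that is known to be false in general. If your intent was to address the remaining open case, you should restrict explicitly to $m=3$ (Conjecture~\ref{thm6}); even there the problem is wide open for small $k$, and the only known resolution (for sufficiently large graphs,~\cite{Kuhn}) is far more elaborate than the outline you give.
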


The example constructed independently by Jackson and Jung (refer to~\cite{Bill}) provides evidence to refute Conjecture~\ref{thm7} for $m \geqslant4$. However, there exists a remaining unresolved case when $m=3$.
\begin{con}[refer to~\cite{Bill}]\label{thm6}
For $k\geqslant 4$, every $3$-connected, $k$-regular graph on at most $4k$ vertices is Hamiltonian.
\end{con}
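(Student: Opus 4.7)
The plan is to attempt a longest-cycle argument by contradiction. Suppose $G$ is a $3$-connected, $k$-regular graph with $|V(G)|\le 4k$ and $k\ge 4$ that admits no Hamiltonian cycle, and let $C$ be a longest cycle of $G$. Pick a component $H$ of $G-V(C)$ and set $R=V(G)\setminus V(C)$, so $R\ne\emptyset$.

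First I would use $\kappa(G)\ge 3$ to produce, via Menger's theorem, three internally disjoint paths from $H$ to three distinct attachment vertices $v_1,v_2,v_3$ on $C$. Fixing an orientation of $C$, a Pósa/Woodall rotation argument gives the classical ``forbidden-successor'' obstructions: for each $v_i$, if $w\in N_C(v_i^+)$ then $w^+$ cannot be adjacent to $H$, for otherwise one could splice in the $v_i$--$H$--$v_j$ detour to produce a cycle longer than $C$. Accumulating these forbidden configurations over the three attachments, and extending them by choosing the paths from $H$ to meet $C$ in different ways, one obtains a bound roughly of the form $|V(C)|\ge 3(k-|H|+1)$. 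Combined with $|V(C)|+|R|\le 4k$ and $k$-regularity, this forces $|R|$ to be small.

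Next I would split into two subcases. If $|R|\le k-2$, the small set $R$ attaches to $C$ at a bounded number of spots; I would choose a short subarc $C'\subset C$ together with $R$ to form an induced subgraph of order at most $2k+1$, pick a $2$-path $P$ straddling an $R$--$C'$ attachment so that the residual graph remains connected, and invoke the theorem of the preceding paper to obtain a Hamiltonian cycle of $G[V(C')\cup R]$ through $P$; this cycle can then be glued to $C\setminus C'$ to contradict the maximality of $C$. If $|R|$ is larger (close to $k$), then $G[R]$ is itself nearly $k$-regular on few vertices, and any small cut of $G[R]$ lifts, together with its $C$-attachments, to a vertex cut of $G$ of order at most three; a careful edge-count inside $R$ together with $3$-connectivity should rule this out.

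The main obstacle is precisely the middle regime $|R|\approx k$ and $|V(C)|\approx 3k$: this is the size range in which the Jackson--Jung-type constructions live for $m\ge 4$, so $3$-connectivity must be used in a genuinely non-local way. Concretely, one must exclude the existence of a $3$-vertex separator $S\subset V(C)$ whose removal leaves two components of roughly balanced size near $k$, because in that configuration the rotation--extension machinery degenerates. I would expect the bulk of the technical effort to go into ruling out such balanced $3$-separators, most likely by iterating the paper's $2$-path Hamiltonian theorem on the pieces produced by the cut and exploiting $k$-regularity to overdetermine the edges crossing $S$.
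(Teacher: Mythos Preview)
The statement you are trying to prove is a \emph{conjecture}, not a theorem: the paper does not prove it, and explicitly notes that it remains open except for the asymptotic resolution of K\"uhn--Lo--Osthus--Staden for sufficiently large $k$. So there is no ``paper's own proof'' to compare against, and your task was ill-posed from the start.

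That said, your sketch has genuine gaps independent of this. The splicing step in the small-$|R|$ subcase is not sound: even if you found a Hamiltonian cycle of $G[V(C')\cup R]$ through a prescribed $2$-path $P$, gluing it to the remaining arc $C\setminus C'$ requires that the cycle enter and exit $V(C')$ at exactly the two endpoints of $C'$ on $C$, which a $2$-path constraint at one attachment does not guarantee. Moreover, the theorems you want to invoke (on $\le 2k+1$ or $\le 3k-6$ vertices) need the ambient graph to be $k$-regular and $2$- or $3$-connected, whereas an arbitrary induced subgraph $G[V(C')\cup R]$ is generally neither. In the large-$|R|$ subcase, your edge-count argument is only outlined; the ``middle regime'' you flag is precisely the obstruction that has kept the conjecture open, and nothing in your plan indicates how $3$-connectivity alone rules out the balanced $3$-separator configurations --- indeed, the Jackson--Jung constructions you mention show the analogous plan fails for $m\ge 4$, so any argument here must exploit something specific to $m=3$ that you have not identified.
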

Considerable progress has been made in investigating the existence of Hamiltonian cycles within $3$-connected, $k$-regular graphs. Conjecture~\ref{thm6} has been resolved for significantly large graphs according to~\cite{Kuhn}, but for a very large (albeit finite) number of cases it remains open. In addition, the proof provided in~\cite{Kuhn} is extensive and intricate, making a simpler proof highly desirable.

Another noteworthy subarea within Hamiltonian graph theory focuses on Hamiltonian cycles that contain specified elements of a graph. Examples include $k$-ordered Hamiltonian graphs~\cite{NS,MWY}, edge-Hamiltonian graphs~\cite{Hao Li}, and others.
One of these directions is the study of $l$-path Hamiltonicity. A graph $G$ on $n$ vertices is said to be \emph{$l$-path Hamiltonian} if every path of length not exceeding $l$, $1\leqslant l\leqslant n-2$, is contained in a Hamiltonian cycle. Kronk in~\cite{Hudson V. Kronk} proved that for a graph $G$ on $n$ vertices, if the sum of the degrees of every pair of non-adjacent vertices of $G$ is at least $n+l$, where $l$ is a positive integer, then $G$ is $l$-path Hamiltonian. The idea of combining $l$-path Hamiltonicity with regular graphs comes from the following result proved by Li in~\cite{Hao Li}.

\begin{thm}[\cite{Hao Li}]\label{thm1}
Let $G$ be a $2$-connected, $k$-regular graph on at most $3k-1$ vertices, and let $e=uv$ be any edge of $G$ such that $\{u, v\}$ is not a cut-set. Then $G$ has a Hamiltonian cycle containing $e$.
\end{thm}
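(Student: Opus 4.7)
I would argue by contradiction: suppose $G$ has no Hamiltonian cycle through $e = uv$. Since $G$ is 2-connected, the edge $e$ lies on some cycle, so let $C$ be a \emph{longest} cycle of $G$ containing $e$, with $c := |V(C)|$, and let $R := V(G) \setminus V(C)$. By assumption $c < n$, so $R \neq \emptyset$. Write $C = u \, v \, w_1 \, w_2 \cdots w_{c-2} \, u$, thus fixing an orientation in which the arc $u \to v$ is the edge $e$.

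The first step is to collect the standard longest-cycle structural facts. By 2-connectivity every $z \in R$ has at least two neighbors on $C$; for any two neighbors $w_i, w_j$ of $z$ on $C$, a detour through $z$ replacing the arc $w_i \cdots w_j$ would yield a strictly longer cycle unless certain successor/predecessor vertices on $C$ are non-adjacent. The novelty compared to the classical Hamilton-cycle proofs is that $e = uv$ must survive in any modified cycle, so rotations cannot cross $e$ and must be carried out separately on the two arcs of $C \setminus e$ emanating from $u$ and from $v$. The hypothesis that $\{u,v\}$ is not a cut set is precisely what allows detours through $R$ to transition between these two arcs.

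Next, I would fix $z \in R$ with neighbors $w_{i_1},\dots,w_{i_d}$ on $C$ and exploit the $k$-regularity to bound $|N(z)|$, $|N(u)|$, $|N(v)|$, and the neighborhoods of the successor vertices $w_{i_j+1}$. Each failed detour translates into a forbidden adjacency relation, and an inclusion-exclusion count on $V(C)$ yields an inequality of the form $c \geqslant 2k + O(1)$. Combining with $c \leqslant n-1 \leqslant 3k-2$ pins $c$ into a narrow window.

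Finally, in this narrow range I would invoke the connectivity of $G - \{u,v\}$ to select a pair $z_1, z_2 \in R$ (possibly equal) joined by a path $Q$ in $G - \{u,v\}$ whose endpoints have neighbors on \emph{opposite} arcs of $C \setminus e$; inserting $Q$ in place of an appropriately chosen sub-arc produces a strictly longer cycle through $e$, contradicting the maximality of $C$. The main obstacle in this program is the delicate case analysis required to preserve the edge $e$ throughout all rotations and insertions: unlike the standard Hamilton-cycle setting, one cannot rotate freely around $C$, so the $u$-side and $v$-side arguments must be coupled exclusively through detours in $R$, and one must rule out the degenerate configurations in which $N(u)$ and $N(v)$ concentrate on the same arc of $C \setminus e$.
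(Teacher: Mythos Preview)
This theorem is not proved in the present paper: it is quoted as a result of H.~Li (reference~\cite{Hao Li}) and used as a black box to derive Corollaries~\ref{thm4} and~\ref{thm8}. Consequently there is no proof in the paper against which your proposal can be compared.

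As for the proposal itself, the general architecture---take a longest cycle through $e$, study a component of $R=G\setminus C$, and derive a contradiction by counting forbidden adjacencies for successors of $N_C(R)$---is indeed the standard template for such results, and is close in spirit to the arguments the paper does carry out for its own Theorems~\ref{thm3} and~\ref{thm5}. However, your sketch stays at the level of intentions rather than mechanisms. The claim that inclusion--exclusion yields ``$c\geqslant 2k+O(1)$'' is not substantiated: the actual bound one needs is closer to $c\geqslant 3k-O(1)$, and obtaining it typically requires either Woodall's hopping lemma (when $R$ has an isolated vertex) or a careful analysis of Hamilton-connectedness of a component $H\subseteq R$ together with the parameter $t(Q)$ counting insertion segments for a longest path $Q$ in $H$. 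Your final step---choosing $z_1,z_2\in R$ with neighbours on opposite arcs of $C\setminus e$ and inserting a path $Q$---does not by itself give a \emph{longer} cycle through $e$; one must also control the lengths of the replaced sub-arcs, which is exactly where the detailed segment-counting enters. In short, the outline is pointed in the right direction but omits the quantitative core of the argument.
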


Theorem~\ref{thm1} shows that a 2-connected, $k$-regular graph $G$ on at most $3k-1$ vertices satisfying $G-V(P)$ is connected for every path $P$ of length 1 is 1-path Hamiltonian. Motivated by above result, Li and Yang in~\cite{Xia Li} proved the following result.

\begin{thm}[\cite{Xia Li}]\label{thm2}
Let $G$ be a $2$-connected, $k$-regular graph on at most $2k$ vertices, and let $P=uvz$ be any path of  $G$ such that $\{u, v, z\}$ is not a cut-set. Then $G$ has a Hamiltonian cycle containing $P$.
\end{thm}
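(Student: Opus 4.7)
The plan is to convert the statement into a Hamilton-path question and then run a longest-cycle extension argument that exploits the hypothesis $n\leqslant 2k$ together with the connectivity of $G-V(P)$. The reduction step is this: a Hamiltonian cycle of $G$ containing $P=uvz$ uses precisely the edges $uv$ and $vz$ at the interior vertex $v$, so deleting $v$ leaves a Hamiltonian $u$--$z$ path in $G-v$; conversely, inserting $v$ back between $u$ and $z$ on such a path yields the required Hamiltonian cycle. So it suffices to find a Hamiltonian $u$--$z$ path in $G-v$, where every vertex still has degree at least $k-1$ and the total order is $n-1\leqslant 2k-1$.

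Next I would argue by contradiction in the cycle language. Assume that no Hamiltonian cycle of $G$ contains $P$ and pick, among all cycles of $G$ that contain $P$, one of maximum length, call it $C$. Let $R=V(G)\setminus V(C)$. By assumption $R\neq\emptyset$. Because $G-V(P)$ is connected and $V(C)\setminus V(P)\neq\emptyset$, there is an edge from some $w\in R$ to some vertex $v_i\in V(C)\setminus V(P)$. The standard longest-cycle reasoning then gives forbidden-successor information: if $v_j\in N(w)\cap V(C)$ and $v_{j+1}$ (the successor of $v_j$ along $C$) also lies in $N(w)$, the arc $v_j\,v_{j+1}$ can be replaced by the detour $v_j\,w\,v_{j+1}$, producing a longer cycle that still contains $P$ (since the 2-path $P$ is internally disjoint from this edge as long as the swap is performed outside $\{u,v,z\}$). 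Hence for each $v_j\in N(w)$ the successor $v_{j+1}$ is a non-neighbor of $w$.

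The heart of the proof is then a counting saturation on $C$. The vertex $w$ has $k$ neighbors in $G$; at most $|R|-1$ of them lie in $R$, so at least $k-|R|+1$ lie on $C$, each contributing a distinct forbidden successor. Together with the $k-1$ neighbors on $C$ of a second vertex $w'\in R$ chosen via the connectivity of $G-V(P)$, and the tight inequality $|V(C)|=n-|R|\leqslant 2k-|R|$, the successor sets fill up $C$ and force two neighbors of $R$ to sit at consecutive positions of $C\setminus V(P)$. This consecutive pair yields either a direct enlargement of $C$ (absorbing a vertex of $R$) or, when $|R|=1$, a direct Hamiltonian reroute, in each case contradicting the maximality of $C$.

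The main obstacle I expect is the interaction of the rotation/insertion moves with the requirement that the block $uvz$ remain a sub-path of the modified cycle: any reroute that would break the two edges incident with $v$ is fatal, so one must distinguish rotation points on the arc of $C$ avoiding $v$ from those touching $P$, and the counting above must be carried out with $P$ removed from the pool of available successors. I also expect the boundary cases $n\in\{2k-1,2k\}$ with $|R|\leqslant 2$ to have no slack in the counting and to require direct ad hoc treatment, leveraging the 2-connectivity of $G$ to produce two internally disjoint paths from $R$ to $V(C)\setminus V(P)$ and thereby force the desired extension.
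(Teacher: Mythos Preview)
This theorem is not proved in the present paper; it is quoted from \cite{Xia Li}.  However, the method of \cite{Xia Li} is visible throughout Sections~2--4 here: one inserts two new degree-$2$ vertices $w_1,w_2$ on the edges $uv$ and $vz$, obtaining a graph $G_1$ on $n+2$ vertices in which \emph{any} Hamiltonian cycle is forced to traverse $uw_1vw_2z$; the problem becomes the ordinary Hamiltonicity of $G_1$.  One then takes a longest cycle $C$ of $G_1$ through $w_1,w_2$, and when $R=G_1-C$ has an isolated vertex one invokes Woodall's hopping lemma to build sets $X,Y\subset V(C)$ whose sizes satisfy a sharp inequality (the analogue of (\ref{eq1}) here), while the case of a nontrivial component $H$ of $R$ is handled by Hamilton-connectedness of $H$ and a segment-counting function $t(Q)$.

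Your plan is substantially different and, as written, has a real gap.  The opening reduction to a Hamiltonian $u$--$z$ path in $G-v$ is correct but you immediately abandon it and revert to ``longest cycle of $G$ containing $P$'', so the reduction does no work.  More seriously, the successor-counting you describe is too coarse to close.  If $|R|=1$ your count gives $d_C(w)=k$ and hence $|C|\geqslant 2k$, yet $|C|=n-1\leqslant 2k-1$; but this uses the rule ``$v_j\in N(w)\Rightarrow v_{j+1}\notin N(w)$'' at \emph{every} $v_j$, and that rule fails precisely when $v_jv_{j+1}\in\{uv,vz\}$.  Those two lost successors are exactly the margin you need, so the contradiction evaporates.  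You acknowledge this obstacle (``any reroute that would break the two edges incident with $v$ is fatal'') and the boundary cases $|R|\leqslant 2$, but you do not supply the argument that recovers the missing count; saying that 2-connectivity will ``force the desired extension'' is not a proof.  The device used in \cite{Xia Li}---and in this paper---of passing to $G_1$ is what systematically absorbs the $P$-constraint: the forbidden edges become forced (through $w_1,w_2$) rather than protected, and the hopping lemma then gives structure far beyond the single-step successor bound.  Without either that device or a worked-out replacement for the two lost successors, the proposal does not go through.
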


From Theorem~\ref{thm1} and Theorem~\ref{thm2}, it can be deduced that if a 2-connected, $k$-regular graph $G$ on at most $2k$ vertices has the property that the graph $G-V(P)$ is connected for every path $P$ of length at most 2, then $G$ is 2-path Hamiltonian.

For positive integer $q\geqslant k$, we define that a class $\mathscr{H}$ of graphs of path $P=uvz$ is a 2-connected, $k$-regular graph on $2q+1$ vertices, which contains two disjoint sets $X$ and $Y$ of vertices such that $Y$ is independent, $X$ contains $\{u,v,z\}$, $|Y|=q$, $|X|=q+1$, $N(Y)\subseteq X$ and $v$ is adjacent to $u$ and $z$.  Figure 1 below illustrates a 2-connected, $k$-regular graph on $2k+1$ vertices belonging to $\mathscr{H}$ with $q=k=4$.

\begin{figure}[H]
  \centering
  \includegraphics[]{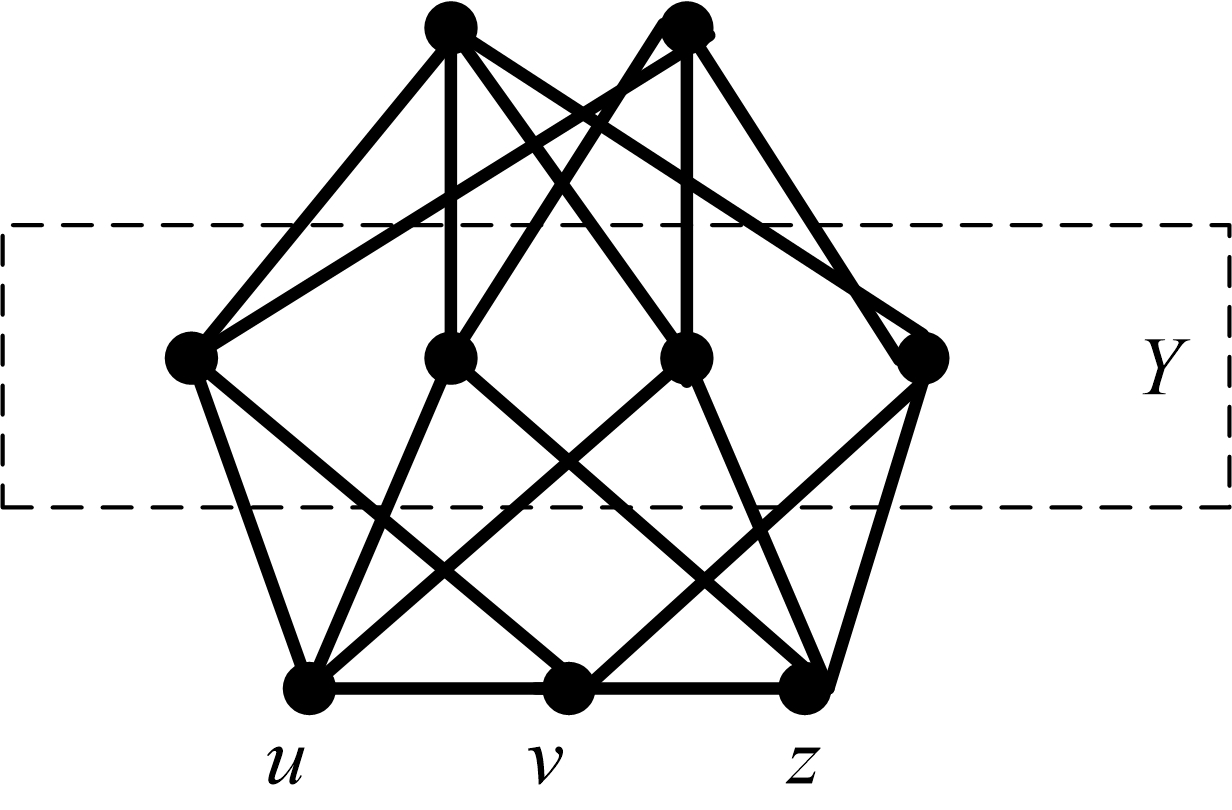}\\
  \caption{}
\end{figure}

The existence of the graph class $\mathscr{H}$ demonstrates that the upper bound $2k$ in Theorem~\ref{thm2} is sharp. Clearly, when $k$ is odd, the class of graphs $\mathscr{H}$ does not exist. Therefore, it is worthwhile to investigate whether the result in Theorem~\ref{thm2} can be strengthened by excluding the graph class $\mathscr{H}$.
In this paper, we are going to prove the following.

\begin{thm}\label{thm3}
Let $G$ be a $2$-connected, $k$-regular graph on $n$ vertices, and let $P=uvz$ be any path of $G$ such that $\{u, v, z\}$ is not a cut-set. If $G\notin \mathscr{H}$ and $n\leqslant 2k+1$, then $G$ has a Hamiltonian cycle containing $P$.
\end{thm}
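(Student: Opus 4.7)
The plan is to reduce to the boundary case $n=2k+1$ and argue that any failure to find a Hamiltonian cycle through $P$ forces $G$ to satisfy the defining structure of $\mathscr{H}$. By Theorem~\ref{thm2} the conclusion already holds when $n\leqslant 2k$, so we may assume $n=2k+1$. Suppose for contradiction that $G\notin\mathscr{H}$ has no Hamiltonian cycle containing $P=uvz$; we aim for a contradiction.

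First I would choose a cycle $C$ of maximum length that contains $P$; such a $C$ exists because $G$ is $2$-connected and $\{u,v,z\}$ is not a cut-set. Set $R=V(G)\setminus V(C)$; by assumption $|R|\geqslant 1$. I would then apply P\'osa-type rotations while keeping the 2-path $P$ pinned on $C$, exactly as in the proof of Theorem~\ref{thm2}. For any $w\in R$, no two neighbors of $w$ on $C$ can be consecutive, or $w$ could be inserted to extend $C$; more generally, rotations around the endpoints of any longest path from $R$ into $C$ enlarge the attachment set while preserving this independence-style constraint.

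Second, I would exploit $k$-regularity together with the tight count $n=2k+1$. Counting edges between $R$ and $V(C)$ and using connectivity of $G-V(P)$, the extension argument of Theorem~\ref{thm2} falls short by exactly one vertex. The extra vertex must either permit an extension to a Hamiltonian cycle through $P$ (a direct contradiction), or else force the vertices of $R$ together with certain non-insertable vertices of $C$ to form an independent set $Y$ of size exactly $k$ whose neighborhood lies entirely in $X:=V(G)\setminus Y$. A direct verification should then give $|X|=k+1$, $\{u,v,z\}\subseteq X$, and $uv,vz\in E(G)$, so $G\in\mathscr{H}$, contrary to hypothesis.

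The main obstacle is the second step: identifying exactly when the $n=2k$ extension argument runs out of room and showing that in every such failing configuration the global bipartite-like structure of $\mathscr{H}$ emerges. This will require careful case analysis on how the components of $R$ attach to $C$, on the placement of $P$ relative to those attachments, and on the parity of $k$ (for odd $k$ the class $\mathscr{H}$ is empty, so any failing configuration must lead to a direct contradiction rather than to membership in $\mathscr{H}$). Once the bipartition is extracted, membership in $\mathscr{H}$ is immediate from its defining properties.
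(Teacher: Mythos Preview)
Your reduction to $n=2k+1$ via Theorem~\ref{thm2} is correct, and the overall shape---``if no Hamiltonian cycle through $P$ exists, the structure of $\mathscr{H}$ is forced''---matches the paper. But what you have written is an outline that defers exactly the steps where the content lies, and two of those steps are genuinely missing rather than merely sketched.

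First, you do not handle the case where $R=V(G)\setminus V(C)$ has a component $H$ with $|H|\geqslant 2$. In the paper this occupies most of the proof: after passing to the auxiliary graph $G_1$ (obtained by subdividing $uv$ and $vz$), one shows that such an $H$ is Hamilton-connected (via Theorem~\ref{thm9}) and then splits into subcases $h=k$, $\tfrac{k+1}{2}\leqslant h\leqslant k-1$, $2\leqslant h\leqslant\tfrac{k}{2}$, and $h\geqslant k+1$, each requiring its own counting or insertion argument. Your proposal mentions ``components of $R$'' only in the obstacle paragraph and gives no mechanism for any of these subcases; P\'osa rotations alone do not dispose of a non-trivial $H$. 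Second, even when $R$ is independent, the paper does not use bare P\'osa rotation but Woodall's hopping lemma to build iterated sets $X_j,Y_j$ with $X\cap Y=\emptyset$ and no two consecutive vertices of $C$ in $X$, and then invokes a specific inequality carried over from~\cite{Xia Li}. The conclusion $G\in\mathscr{H}$ arises only in the boundary situation $x=y$ (forcing $R$ to be a single vertex and $n_1=2x+1$); ruling out $x\geqslant y+1$ uses that inequality, not ``a direct verification''. Finally, the paper treats small $k$ (in effect $k=4$, since $n=2k+1$ is odd) by an explicit check over $|C|\in\{6,7,8,9\}$; your parity remark that $\mathscr{H}$ is empty for odd $k$ does not replace this, since you still must produce the Hamiltonian cycle.
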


The following corollary  follows from Theorem~\ref{thm1} and Theorem~\ref{thm3}.

\begin{cor}\label{thm4}
Let $G$ be a $2$-connected, $k$-regular graph on at most $2k+1$ vertices. If $G-V(P)$ is connected for every path $P$ of length at most $2$ and $G\notin \mathscr{H}$, then $G$ is $2$-path Hamiltonian.
\end{cor}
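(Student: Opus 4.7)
The plan is to verify the conclusion separately for paths $P$ of length $0$, $1$, and $2$, invoking Theorem~\ref{thm3} for length~$2$ and Theorem~\ref{thm1} for length~$1$; the length-$0$ case (single vertices) is absorbed trivially into either of the other two, since any Hamiltonian cycle automatically contains every vertex.

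For a $2$-path $P = uvz$, the hypotheses of the corollary have been engineered to exactly match those of Theorem~\ref{thm3}: $n \leqslant 2k+1$ is given, $\{u,v,z\}$ is not a cut-set because $G - V(P)$ is connected by assumption, and $G \notin \mathscr{H}$ is given. Hence Theorem~\ref{thm3} yields a Hamiltonian cycle containing $P$. For a $1$-path $P = uv$, I would apply Theorem~\ref{thm1}; its hypotheses require that $\{u,v\}$ is not a cut-set (immediate from the connectivity assumption on paths of length at most $2$) and that $n \leqslant 3k-1$. The latter follows from $n \leqslant 2k+1$ combined with $k \geqslant 2$, and $k \geqslant 2$ is forced by the $2$-connectivity of $G$.

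No substantial obstacle arises; the corollary is essentially a bookkeeping consequence combining the regimes of Theorems~\ref{thm1} and~\ref{thm3}. The only inequality requiring verification is $2k+1 \leqslant 3k-1$, equivalent to $k \geqslant 2$, which is automatic from the $2$-connectivity hypothesis. Once this is checked, the combined coverage of lengths $0$, $1$, and $2$ establishes $2$-path Hamiltonicity of $G$ by definition, as required. The real work, of course, is hidden inside Theorem~\ref{thm3}, whose proof must confront the sharpness witnessed by $\mathscr{H}$ and thereby justifies its explicit exclusion in the hypothesis.
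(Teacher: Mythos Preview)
Your proof is correct and takes essentially the same approach as the paper, which simply asserts that the corollary follows from Theorem~\ref{thm1} and Theorem~\ref{thm3}. Your write-up supplies the routine verifications (in particular that $2k+1\leqslant 3k-1$ via $k\geqslant 2$ from $2$-connectedness, and that the cut-set conditions match the connectivity hypothesis on $G-V(P)$) that the paper leaves implicit.
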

We present an illustrative example that demonstrates the sharpness of the bound in Corollary~\ref{thm4}. Let $H_{i}$, $i\in\{1,2\}$, be a graph which is obtained from $K_{k+1}$ by deleting one edge $e_{i}=a_{i}b_{i}$. We can construct a 2-connected, $k$-regular graph $G$ on $2k+2$ vertices from $H_{1}$ and $H_{2}$ by adding $a_{1}a_{2}$ and $b_{1}b_{2}$. Notably, there is a 2-path in $G$ that is not contained in any Hamiltonian cycle of $G$. Obviously, this example is not 3-connected graph. Is it possible to improve the result for 3-connected, $k$-regular graphs? In this paper, we also prove the following.

\begin{thm}\label{thm5}
Let $G$ be a $3$-connected, $k$-regular graph on $n$ vertices, and let $P=uvz$ be any path of  $G$ such that $\{u, v, z\}$ is not a cut-set. If $G\notin \mathscr{H}$ and $n\leqslant 3k-6$, then $G$ has a Hamiltonian cycle containing $P$.
\end{thm}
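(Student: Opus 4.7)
Since every $3$-connected graph is $2$-connected, Theorem~\ref{thm3} already handles $n \leqslant 2k+1$, so I would reduce to the range $2k+2 \leqslant n \leqslant 3k-6$, which in particular forces $k \geqslant 8$. The plan is then to argue by contradiction in the spirit of the proof of Theorem~\ref{thm2}: assume no Hamiltonian cycle of $G$ contains $P$ and choose a cycle $C$ of maximum length subject to containing $P$ as a subpath. Put $R = V(G) \setminus V(C)$ and $r = |R| \geqslant 1$; the hypothesis $n \leqslant 3k-6$ then gives $|V(C)| \leqslant 3k-6-r$.

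The core of the argument is a P\'osa-type rotation, modified to preserve the subpath $P$. Orient $C$ so that $u,v,z$ are consecutive in that order, and for $x \in V(C)$ let $x^+$ and $x^-$ denote the successor and predecessor. For $w \in R$ and $x \in N_C(w)$, provided neither $x$ nor $x^+$ lies in $\{u,v,z\}$, maximality of $C$ forbids $x^+ \in N(w)$; otherwise splicing $w$ between $x$ and $x^+$ produces a longer cycle still containing $P$. A parallel statement holds on the $x^-$ side, and an analogous one for pairs $w,w' \in R$ joined by a path in $G[R]$. Combining these constraints with $3$-connectivity and the hypothesis that $G - V(P)$ is connected, one obtains an upper bound on $\sum_{w\in R}|N_C(w)|$ in terms of $|V(C)|$ and $r$. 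Balancing this against the degree identity $\sum_{w\in R} d(w) = rk$ and the size bound $|V(C)| \leqslant 3k-6-r$ should yield a contradiction for all but finitely many residual configurations.

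Those remaining small configurations would then be ruled out either by direct inspection or by showing they force $G \in \mathscr{H}$, which is excluded by hypothesis. The main obstacle I anticipate is the boundary behaviour at $P$: the standard rotation argument breaks down when the would-be pivots fall on $\{u,v,z\}$ or their immediate $C$-neighbors, because this either destroys the subpath $P$ or forces $P$ to be traversed in the wrong direction within the new cycle. Handling these boundary cases cleanly, while still extracting a counting inequality sharp enough to meet the bound $3k-6$ rather than the more generous $3k$ appearing in Conjecture~\ref{thm6}, is where I expect the bulk of the technical work to lie.
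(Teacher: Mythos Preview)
Your outline shares the high-level contradiction setup with the paper, but two concrete ingredients are missing, and without them the counting you sketch will not reach $3k-6$.

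First, the paper does not work directly with a longest cycle containing $P$ as a subpath. Instead it subdivides the edges $uv$ and $vz$ by new degree-$2$ vertices $w_1,w_2$ to form $G_1$, and then seeks an ordinary Hamiltonian cycle in $G_1$. This converts the awkward ``preserve $P$'' constraint into the harmless condition that $C$ pass through $w_1,w_2$; the boundary difficulties you anticipate at $\{u,v,z\}$ are then absorbed into the bookkeeping that ``at least two of $\{v,w_1,w_2\}$ cannot be adjacent to $c_i^{\pm}$'', which costs only a constant in every degree estimate. Trying to run P\'osa rotations while explicitly protecting $P$, as you propose, recreates exactly the case-splitting this device is designed to avoid.

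Second, the global inequality you aim for --- bounding $\sum_{w\in R}|N_C(w)|$ against $rk$ and $|V(C)|$ --- is too coarse to close the gap at $3k-6$. The paper's argument is not a single count but a case analysis on the size $h$ of the largest component $H$ of $R$. When $R$ is independent it invokes Woodall's hopping lemma (not a bare P\'osa rotation) to produce the sets $X,Y$ and the inequality forcing $n_1\geqslant 3k$; the exclusion $G\notin\mathscr{H}$ is used precisely here to rule out $x=y$. When $2\leqslant h\leqslant k$ it first proves $H$ is Hamilton-connected (via Theorem~\ref{thm9}) and then exploits Hamiltonian paths of $H$ to get long interior segments on $C$. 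When $k+1\leqslant h\leqslant 2k-7$ it introduces the parameter $t(Q)$ for a carefully chosen path $Q$ in $H$ satisfying condition $(*)$, and proves three structural claims ($g\leqslant k-8$, $Q$ is maximal, $t(Q)\geqslant 4$) before the final inequality. None of this structure is visible in a single summation over $R$, and without it the bound you obtain will fall well short of $3k-6$.
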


The following corollary  follows from Theorem~\ref{thm1} and Theorem~\ref{thm5}.

\begin{cor}\label{thm8}
Let $G$ be a $3$-connected, $k$-regular graph on at most $3k-6$ vertices. If $G - V(P)$ is connected for every path $P$ of length at most $2$ and $G\notin \mathscr{H}$, then $G$ is $2$-path Hamiltonian.
\end{cor}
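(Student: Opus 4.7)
\textbf{Proof plan for Theorem~\ref{thm5}.}
My plan is to argue by contradiction, combining a size reduction to Theorem~\ref{thm3} with a longest-cycle extremal argument. If $n \leq 2k+1$, then Theorem~\ref{thm3} applies directly (the hypothesis $G\notin\mathscr{H}$ is shared), so I may assume $2k+2 \leq n \leq 3k-6$; in particular this forces $k \geq 8$. Suppose for contradiction that no Hamiltonian cycle of $G$ contains $P = uvz$, let $C$ be a longest cycle of $G$ through $P$, orient $C$ so that $u$, $v$, $z$ appear in this cyclic order, and set $R := V(G)\setminus V(C) \neq \emptyset$.

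The core of the proof is an attachment analysis of $R$ to $C$. Let $H$ be a component of $G[R]$. Since $G$ is $3$-connected and $\{u,v,z\}$ is not a cut-set, Menger's theorem yields three internally vertex-disjoint paths from $V(H)$ to $V(C)$ whose endpoints $f_1, f_2, f_3$ on $C$ are distinct and do not all lie in $\{u,v,z\}$; I call these the \emph{feet} of $H$. Using the maximality of $C$, I derive standard forbidden-adjacency conditions: whenever two feet $f_i$, $f_j$ are joined by an arc of $C$ that could be rerouted through $H$ together with its attachment paths, the cycle $C$ would be extensible, contradicting its choice. These forbidden conditions, together with the $k$-regularity of $G$, force the positions of $f_1, f_2, f_3$ and of the neighbours of $H$ on $C$ to occupy enough distinct vertices that the resulting counting inequality yields $n = |V(C)| + |R| \geq 3k - 5$, contradicting the hypothesis $n \leq 3k-6$.

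The principal obstacle will be the treatment of the \emph{locked arc} of $C$ containing the edges $uv$ and $vz$. Standard rerouting lemmas replace a short arc of $C$ by a detour through $H$, but such a replacement is prohibited whenever the arc meets $\{u,v,z\}$ because the path $P$ must be preserved inside every candidate cycle. When some $f_i$ or a forbidden neighbour falls into this locked region, I will have to invoke a second attachment path of $H$, and this is precisely where $3$-connectivity (rather than the $2$-connectivity available in Theorem~\ref{thm2}) becomes decisive. A second subtle point is to verify that the near-extremal configurations of the final counting argument do not coincide with the class $\mathscr{H}$; here the strict lower bound $n \geq 2k+2$ produced by the reduction, together with the hypothesis $G\notin\mathscr{H}$, supplies the slack needed to exclude these configurations and close the proof.
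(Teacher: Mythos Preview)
The statement you were asked to prove is Corollary~\ref{thm8}, which the paper deduces in one line from Theorem~\ref{thm1} (for paths of length~$1$) and Theorem~\ref{thm5} (for paths of length~$2$). Your proposal is instead a sketch of Theorem~\ref{thm5}; you never say how paths of length~$1$ are handled, so at minimum you should state that Theorem~\ref{thm1} covers that case since $n\le 3k-6<3k-1$.

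Treating your text as a plan for Theorem~\ref{thm5}, there are two substantive gaps relative to the paper's proof. First, you only analyse a component $H$ of $G[R]$ with $|H|\ge 1$ via three Menger feet, but you give no argument for the case in which $R$ consists entirely of isolated vertices. In the paper this case is not a corollary of any ``three feet'' count; it is handled by Woodall's hopping lemma (building the sets $X_j,Y_j$ and invoking the inequality~(\ref{eq1})), and it is precisely here that the hypothesis $G\notin\mathscr{H}$ is used. Your sketch contains no analogue of this machinery.

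Second, even when $|H|\ge 2$, the assertion that ``the resulting counting inequality yields $n\ge 3k-5$'' is where essentially all the work lies, and three feet are far too coarse to produce it. The paper splits according to $h=|H|$: for $2\le h\le k$ one first proves $H$ is Hamilton-connected (Claim~\ref{lem1}) and then, via a maximum matching into $C$ or via Claim~\ref{lem3}, obtains $|C|\ge (k-h+1)+(k-h-1)h+2$; for $k+1\le h\le 2k-7$ one needs a delicate maximal-path analysis (Claims~\ref{lem5}--\ref{lem7}) to force $t(Q)\ge 4$; only for $h\ge 2k-6$ does a direct neighbour count of a single vertex $x'^-$ suffice. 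Your plan gestures at ``standard forbidden-adjacency conditions'' but does not indicate how any of these case-dependent bounds would be reached, and in particular the Menger/three-feet framework gives no leverage when $h$ is small (each vertex of $H$ then has at least $k-h+1\ge 4$ neighbours on $C$, and one must exploit all of them, not just three). As written, the plan does not contain the ideas needed to close either of these cases.
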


The $L$-graph (as discussed in~\cite{Hao Li}) as a counterexample of Theorem~\ref{thm1} with $3k$ vertices is 3-connected for $k\geqslant6$.  Consequently, the bound provided in Corollary~\ref{thm8} is nearly optimal.
\section{Notation and preliminaries}
All graphs mentioned in this paper are finite simple  graphs. For standard graph theory notation and terminology not explained in this paper, we refer the reader to~\cite{bondy}. Let $G$ be a graph. $|G|$ and $\delta$ denote the number of vertices and the minimum degree of $G$, respectively. For $x \in V(G)$ and $S\subseteq V(G)$, $N_{G}(x)$ denotes the neighbors of $x$ in $G$, $N_{G}(S)=\bigcup_{x\in S} N_{G}(x)$ and $d_{G}(x)=|N_{G}(x)|$. For a cycle $C$ in $G$ with a fixed orientation, and any two vertices $x, y$ on $C$, we denote by $x^{+}$ and $x^{-}$ the following vertex and the preceding vertex of $x$ according to the orientation of $C$, respectively. We define the segment $C [x, y]$ to be the set of vertices on $C$ from $x$ to $y$ (including $x$ and $y$) according to the orientation and let $C(x,y) = C[x,y] - \{x,y\}$. Analogously, $C[x,y)$ and $C(x,y]$ are also defined. Let $A$ be a set of vertices of $G$. An $A$-segment is a $C[x, y]$ segment such that $C [x, y] \cap A = \{x,y\}$. We put $x^{+2}=(x^{+})^{+}$ ($x^{-2}=(x^{-})^{-})$ and $x^{+i}=(x^{+(i-1)})^{+}$ ($x^{-i}=(x^{-(i-1)})^{-})$.

For a cycle $C$ and any $A, B\subseteq V(G)$, let $E(A, B)=\{uv\in E(G):u\in A, \ v\in B\}$ ($E^{'}(A, B)=\{uv\in E(G)-E(C):u\in A, v\in B\}$) and $E(A)=\{uv\in E(G):u, v\in A\}$ ($E^{'}(A)=\{uv\in E(G)-E(C):u, v\in A\}$). Put $e(A, B)=|E(A, B)|$, $e^{'}(A, B)=|E^{'}(A, B)|$, $e(A)=|E(A)|$ and $e^{'}(A)=|E^{'}(A)|$. For convenience, we often use a subgraph $H$ of a graph $G$ to denote its vertex set $V(H)$ if no confusion
arises. We shall use the following result.

\begin{thm}[\cite{Bill}]\label{thm9}
Let $G$ be a connected graph such that for every longest path $P$ in $G$, the sum of the degrees of the end-vertices of $P$ is at least $|P|+1$. Then $G$ is Hamilton-connected.
\end{thm}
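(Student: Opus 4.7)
Take a longest path $P = v_0 v_1 \cdots v_\ell$ in $G$. Because $P$ is maximal, all neighbours of $v_0$ and of $v_\ell$ lie on $P$, so the index sets
\[
S := \{i : v_0 v_i \in E(G)\}, \qquad T := \{i : v_\ell v_{i-1} \in E(G)\}
\]
are subsets of $\{1, \ldots, \ell\}$ with $|S|+|T| = d(v_0)+d(v_\ell) \geq |P|+1 > \ell$. Hence $S \cap T \neq \varnothing$, and for any $i$ in the intersection the edges $v_0 v_i$ and $v_\ell v_{i-1}$ splice $P$ into a cycle $C$ with $V(C) = V(P)$. If $V(C) \neq V(G)$, connectivity of $G$ produces $w \notin V(C)$ with a neighbour on $C$; starting from $w$ and traversing $C$ yields a path strictly longer than $P$, a contradiction. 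So $V(C) = V(G)$ and $G$ has a Hamilton cycle.

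\textbf{Stage 2 (Hamilton-connectedness).} Fix arbitrary $x, y \in V(G)$ and let $Q$ be a longest $x$-$y$ path; the goal is to show $V(Q) = V(G)$. Assuming otherwise, I would apply a P\'osa-style rotation-extension procedure to $Q$: rotations at the endpoints generate a family $\mathcal{Q}$ of paths of length $|Q|$ whose endpoints sweep through ``rotation orbits'' of $x$ and $y$. The strategy is to locate a rotated member $Q' \in \mathcal{Q}$ which is also a longest path of $G$ itself, so that the hypothesis delivers a large combined neighbourhood at the endpoints of $Q'$. Combining this with the existence, guaranteed by connectivity, of some $w \in V(G) \setminus V(Q)$ with a neighbour on $V(Q)$, one aims to force an extension of $Q$ through $w$: either an endpoint of $Q'$ already has a neighbour in $V(G) \setminus V(Q)$, producing an $x$-$y$ path longer than $Q$ after rerouting, or two rotated endpoints admit a P\'osa-type crossing whose splice absorbs $w$.

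\textbf{The crux.} Stage 1 is entirely routine; the genuine difficulty sits in Stage 2, because the hypothesis bounds degree sums for endpoints of \emph{longest paths of $G$}, whereas the object I can actually control is a longest $x$-$y$ path, and the two notions coincide only when $Q$ is already Hamilton. Certifying that the rotation-extension procedure produces a path that is in fact globally longest, so that the hypothesis applies, and then converting the resulting neighbourhood information into a genuine $x$-$y$ extension, is the technical heart. I expect a case split on whether $|V(Q)|$ equals $n-1$ (so that $Q$ is a Hamilton path minus one vertex and a single rotation often suffices) or is strictly smaller (requiring iterated rotations and careful control of the endpoint orbits together with the way they meet $V(G) \setminus V(Q)$).
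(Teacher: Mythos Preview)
The paper does not prove Theorem~\ref{thm9}; it is quoted from \cite{Bill} and invoked as a black box in the proof of Claim~\ref{lem1}. There is therefore no in-paper argument to compare your attempt against.

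On its own merits, your proposal is not a proof. Stage~1 is standard and correct, and in particular it shows that every longest path of $G$ is a Hamilton path. Stage~2, however, is only a plan---the phrasing ``I would apply'', ``one aims to force'', ``I expect a case split'' signals that nothing has actually been carried out---and the plan as described cannot succeed. You propose to rotate a longest $x$--$y$ path $Q$ until some rotated member $Q'\in\mathcal{Q}$ is a globally longest path of $G$, so that the hypothesis applies to the endpoints of $Q'$. But rotations preserve $V(Q)$, hence $|Q'|=|Q|$; if $|Q|<n$ then, by Stage~1, no rotation of $Q$ is a longest path of $G$, and the hypothesis never engages. You acknowledge exactly this obstruction in your ``crux'' paragraph but offer no mechanism to overcome it. A correct argument must bring the hypothesis to bear along a different route---for instance, by working directly with the (nonempty, by Stage~1) family of Hamilton paths and using the degree-sum condition on their endpoints to show that from any fixed vertex $x$ one can reach every other vertex as the far endpoint of some Hamilton path.
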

We first introduce an \emph{operation} used during the proofs of Theorems~\ref{thm3} and Theorems~\ref{thm5}. Let $G$  be a connected graph on $n$ vertices, and let $P=uvz$ be a path of  $G$. We define a new graph $G_{1}$ by inserting two vertices $w_{1}$ and $w_{2}$ on the edges $e_{1}=uv$ and $e_{2}=vz$ of $P$ respectively. Then we have $G_{1}=(G-\{e_{1}, e_{2}\})\cup\{w_{1}, w_{2}\}\cup\{uw_{1}, w_{1}v, vw_{2}, w_{2}z\}$, $P_{1}=u w_{1} v w_{2} z$ and $|G_{1}|=n_{1}=n+2$. Clearly, if we want to prove that $P$ is contained in a Hamiltonian cycle in $G$, it is sufficient to prove that $G_{1}$ is Hamiltonian.
\section{Proof of Theorem~\ref{thm3}}
From Theorem~\ref{thm2}, we have that Theorem~\ref{thm3} holds for $n\leqslant 2k$. So we only need to consider the case $n=2k+1$. Let $G$  be a 2-connected, $k$-regular graph on $n=2k+1$ vertices, and let $P=uvz$ be a path of  $G$ such that $\{u, v, z\}$ is not a cut-set. After the operation in section 2, we have $|G_{1}|=n_{1}=2k+3$. Suppose $G_{1}$ is not Hamiltonian. Let $C$ be a longest cycle of $G_{1}$ containing $w_{1}$ and $w_{2}$, such that the number of components of $R=G_{1}-C$ is as small as possible. Let $r=|R|$ and $C=c_{1} c_{2}  \cdots  c_{n_{1}-r}$. The subscripts of $c_{i}$ will be reduced modulo $n_{1}-r$ throughout. Obviously, we have $|C|=n_{1}-r \geqslant 6$.

Suppose $R$ is an independent set, then let $v_{0}$ be an isolated vertex in $R$. Put $Y_{0}=\emptyset$, and for any $j \geqslant 1$, $X_{j}=N(Y_{j-1}\cup\{v_{0}\})$, $Y_{j}=\{c_{i}\in V(C):c_{i-1}, c_{i+1}\in X_{j}\}$, $X=\bigcup\limits_{j = 1}^\infty  {{X_j}}$, $Y=\bigcup\limits_{j = 0}^\infty  {{Y_j}}$, $x=|X|\geqslant k $ and $y=|Y|$ .
By the hopping lemma (\cite{Woodall}), we have $X \subset V(C),\  X \cap Y = \emptyset $ and $X$ does not contain two consecutive vertices of $C$. Let $S_{1}, S_{2},  \cdots , S_{x}$ be the sets of vertices contained in the open $X$-segment of $C$ (the sets of vertices on $C$ between $X$ satisfying $S_{i}\cap X = \emptyset$ for each $i$ with $1\leqslant i \leqslant x$). Put $\phi = \{ {S_i}:\left| {{S_i}} \right| \geqslant2, 1\leqslant i\leqslant x\}.  $
Then $S_{i}=\{c_{l}, c_{l+1}, \cdots , c_{m}\}\in\phi $ is said to be \emph{$\psi $-connected} to $S_{j}=\{c_{q}, c_{q+1}, \cdots , c_{z}\}\in\phi $ if $\left| {{S_i}} \right|$ is odd and $c_{q}$ and $c_{z}$ are both joined to $c_{l+e}$ for all odd $e$, $1\leqslant e \leqslant m-l-1$. Now, $c_{l+1}, c_{l+3}, \cdots, c_{m-1}$ are called \emph{$P$-vertices} of $S_{i}$.
Set $P\  =\  \{c_{i}\in V(C)$: $c_{i}$ is a $P$-vertex of some $S_{j}$ which is $\psi$-connected to some $S_{t}$ of $\phi$\}, and $p=|V(P)|$.
By the same proof as the case 1 in \cite{Xia Li}, we have the  following  inequality which is the inequality (4) in \cite{Xia Li},
\begin{equation}\label{eq1}
p + 4 \leqslant (n_{1} - 2x - k)(n_{1} - 1 - 2x - p) + k - 2(r_{1} - 1)(x - y - 1).
\end{equation}
From the definition of $P$, we have  $p \leqslant \frac{n_{1} - 1 - 2x}2$, which implies $n_{1} - 1 - 2x -p\geqslant 2p - p \geqslant 0$. And $k \geqslant n_{1} - 1- 2x - p$ by $n_{1}= 2k+3$ and $x\geqslant k$. So we have
\begin{equation}\label{eq2}
p + 4 \leqslant (n_{1} - 2x - k + 1)k - 2(r_{1} - 1)(x - y - 1).
\end{equation}
By the definitions of $X$ and $Y$, we have $x\geqslant y$. Now, we claim $x\geqslant y+1$. Otherwise, when $x=y$, since $d_{G_{1}}(w_{1})=d_{G_{1}}(w_{2})=2$, we have $u, v, z$ belong to $X$ and $w_{1}$, $w_{2}$ belong to $Y$. If $R$ contains at least two isolated vertices, then $|Y\cup R|\geqslant y+2=x+2$. Because $Y\cup R$ is an independent set and $N(Y\cup R)\subseteq X$, we have that there are at least $ky+4=kx+4$ edges from $Y\cup R$ to $X$, but $X$ accepts at most $kx$ edges, a contradiction. So, when $x=y$,  $R$ contains only one isolated vertex, which implies $n_{1}=2x+1$ and $n=2x-1=2q+1$ for $q=x-1$. By definition of $\mathscr{H}$, we have  $G\in \mathscr{H}$, a contradiction.
Therefore by (\ref{eq2}), we have $n_{1} - 2x - k + 1 > 0$, and then $n_{1} > 3k-1\geqslant 3k$, which contradicts $n_{1} =2k+3$.

Thus in the following proof, we assume that there exists a component $H$ in $R$ such that $|H| \geqslant 2$. For a path $Q = {q_1} {q_2}  \cdots  {q_g}, g \geqslant 2$, in $H$, let $t(Q)$ denote the number of $C[c_{i}, c_{j}]$  such that $c_{i}$ is joined to one of $q_{1}$ and $q_{g}$, $c_{j}$ is joined to the other, and $e(\left\{ {{q_1}, {q_g}} \right\}, \{{c_{i + 1}}, {c_{i + 2}},  \cdots , {c_{j - 1}}\}) = 0$. We say that $Q$ satisfies the condition $(*)$ if $t(Q)\geqslant 2$,  ${N_{C}}(\left\{ {{q_1}, {q_g}} \right\}) \not\subset
 \left\{ {u, v, z} \right\}$ and there is a  $C[c_{i}, c_{j}]$ such that $u, v, z, {w_1}\ $and$ \ {w_2} \notin \left\{ {{c_{i + 1}}, {c_{i + 2}},  \cdots , {c_{j - 1}}} \right\}$. Now, let $H$ be the largest component of $R$ and $h=|H|$. The rest of the proof of Theorem~\ref{thm3} is divided into two cases. We first consider the case of $k\geqslant6$, and we prove it in the following two cases.

$\bf Case \  1. $ $2 \leqslant h \leqslant k$.

\begin{clm}\label{lem1}
For $k\geqslant6$, if $2 \leqslant h \leqslant k$, then $H$ is Hamilton-connected.
\end{clm}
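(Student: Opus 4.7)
The natural route is to verify the hypothesis of Theorem~\ref{thm9} for $H$. Since $H$ is trivially Hamilton-connected when $h = 2$, assume $h \geq 3$, and suppose for contradiction that $H$ is not Hamilton-connected. Theorem~\ref{thm9} then furnishes a longest path $P^{*} = p_{1} p_{2} \cdots p_{g}$ of $H$ with $d_{H}(p_{1}) + d_{H}(p_{g}) \leq g$.

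The first step is to force the endpoints of $P^{*}$ to have large $C$-neighbourhoods. Because $H$ is a component of $R = G_{1} - C$, neither $p_{1}$ nor $p_{g}$ has any neighbour in $V(R) \setminus V(H)$; moreover $\{u,v,z\} \subseteq V(C)$ implies $p_{1}, p_{g} \notin \{u,v,z\}$, so $d_{G_{1}}(p_{1}) = d_{G_{1}}(p_{g}) = k$. Hence
\[
|N_{C}(p_{1})| + |N_{C}(p_{g})| \;\geq\; 2k - (d_{H}(p_{1}) + d_{H}(p_{g})) \;\geq\; 2k - g \;\geq\; 2k - h \;\geq\; k.
\]

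I would next run a cycle-extension argument to forbid specific patterns on $C$. Fix the orientation of $C$ and let $c^{+}$ denote the successor of $c$. If $c \in N_{C}(p_{1})$ and $c^{+} \in N_{C}(p_{g})$, then
\[
C' \;=\; (C - c c^{+}) \;\cup\; c p_{1} p_{2} \cdots p_{g} c^{+}
\]
is a cycle of $G_{1}$ that contains $V(C) \supseteq \{w_{1}, w_{2}\}$ and has $|C'| = |C| + g > |C|$, contradicting the maximality of $C$. Reversing the orientation, or swapping the roles of $p_{1}$ and $p_{g}$, yields the companion relations $N_{C}(p_{1})^{+} \cap N_{C}(p_{g}) = \emptyset$ and $N_{C}(p_{1})^{-} \cap N_{C}(p_{g}) = \emptyset$.

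The last step is to combine these disjointness relations with the bound $|V(C)| = n_{1} - r = 2k + 3 - r \leq 2k + 1$, coming from $r \geq h \geq 2$, in order to contradict the lower bound obtained above. The raw inequality $|N_{C}(p_{1})| + |N_{C}(p_{g})| \leq |V(C)|$ alone only yields $r \leq g + 3$, which is too weak; I would close the gap by enlarging the forbidden set, either via P\'osa-style rotations along $P^{*}$ (each $p_{j-1}$ with $p_{1} p_{j} \in E(H)$ is the endpoint of another longest path of $H$ ending at $p_{g}$, so the cycle-extension argument applies again) or by tracking how the five distinguished vertices $u, v, z, w_{1}, w_{2}$ on $C$ force additional exclusions from $N_{C}(p_{1})^{\pm}$. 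The main obstacle lies precisely in pushing this counting through: turning the rotation and positional information into an inequality tight enough, under the hypotheses $k \geq 6$ and $2 \leq h \leq k$, to force $|N_{C}(p_{1})| + |N_{C}(p_{g})| > |V(C)|$ and thereby obtain the desired contradiction.
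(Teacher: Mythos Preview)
Your setup is fine---invoking Theorem~\ref{thm9}, taking a longest path $P^{*}=p_{1}\cdots p_{g}$ with $d_{H}(p_{1})+d_{H}(p_{g})\le g$, and deducing $|N_{C}(p_{1})|+|N_{C}(p_{g})|\ge 2k-g$---and you are honest that the argument is unfinished. But the place where you stop is exactly where the substance of the claim lives, and neither of your suggested fixes (P\'osa rotations inside $H$, or bookkeeping with $u,v,z,w_{1},w_{2}$) is the right lever.

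The point you are missing is that the maximality of $C$ gives far more than the one-vertex disjointness $N_{C}(p_{1})^{+}\cap N_{C}(p_{g})=\emptyset$. If $c_{i}$ is adjacent to one endpoint of $P^{*}$ and $c_{j}$ to the other, with no vertex of $N_{C}(\{p_{1},p_{g}\})$ strictly between them and with $w_{1},w_{2}\notin C(c_{i},c_{j})$, then replacing the arc $C[c_{i},c_{j}]$ by $c_{i}P^{*}c_{j}$ produces a cycle through $w_{1},w_{2}$ of length $|C|-|C(c_{i},c_{j})|+g$; maximality forces $|C(c_{i},c_{j})|\ge g$, not merely $\ge 1$. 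This is encoded in the paper's function $t(Q)$: there are at least $t(Q)-2$ such segments avoiding $w_{1},w_{2}$, each contributing $g$ interior vertices, and this multiplicative gain in $g$ is what closes the gap. The paper then proceeds in two steps: first a case analysis showing that in fact $N_{C}(p_{1})=N_{C}(p_{g})$ (so $t(Q)=|N_{C}(p_{1})|\ge k-g/2$), and second the bound $n_{1}\ge (t(Q)-2)g+t(Q)+h+2\ge (k-\tfrac{g}{2}-1)(g+1)+3$, which is concave in $g$ and exceeds $2k+3$ at both endpoints $g=3$ and $g=k$ once $k\ge 6$. Your proposed rotations would only manufacture more endpoints of longest paths, each subject to the same weak one-vertex exclusion; without the segment-length bound above you will not recover the needed factor of $g$.
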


\begin{proof}
By contradiction, suppose $H$ is not Hamilton-connected. From Theorem~\ref{thm9}, we can choose a longest path $Q = {q_1} {q_2}  \cdots  {q_g}, g \geqslant 2$, in $H$, satisfying ${d_{H}}({q_1}) + {d_{H}}({q_g}) \leqslant |Q|=g $. For any $v\in V(H)$, since $h\leqslant k$, we have ${N_{C}}({v}) \geqslant 1$. Let $X={N_{C}}({q_1}) \cap {N_{C}}({q_g})$ and $|X|=x$.

First, we prove that ${N_{C}}({q_1}) = {N_{C}}({q_g})$. Otherwise, without loss of generality assume that ${d_{C}}({q_1}) \leqslant {d_{C}}({q_g})$. If ${d_{C}}({q_1}) = 1$, we have ${d_{C}}({q_g}) \geqslant  2k-g-1$ since ${d_{C}}({q_1}) + {d_{C}}({q_g}) \geqslant 2k-g $. Since $C$ is the longest cycle of $G_{1}$ containing $w_{1}$ and $w_{2}$, we have that every ${N_{C}}({q_g})$-segment of $C$ contains at least one  interior vertex. Hence $ n_{1} \geqslant |H| +|C| \geqslant g + 2{d_{C}}({q_g})\geqslant g + 2(2k-g-1)=4k-g-2 \geqslant 3k-2$, a contradiction. Thus, we have $2 \leqslant {d_{C}}({q_1}) \leqslant {d_{C}}({q_g})$. It is easy to prove that $t(Q)\geqslant 2$ and $x+1\leqslant t(Q)$.
Since ${N_{C}}(\{q_1,\ q_g\})\cup ({N_{C}}(\{q_1,\ q_g\}))^{+}\cup H \subseteq V(G_{1}) $, we have
\[\begin{array}{l}
n_{1} \geqslant \left| C \right| + \left| H\right| \geqslant \left| H \right|+2|{N_{C}}(\{q_1,\ q_g\})|+(t(Q)-2)(g-1)\\
 \ \  \ \ \geqslant g+2[{d_{C}}({q_1}) + {d_{C}}({q_g})]-2|{N_{C}}({q_1}) \cap {N_{C}}({q_g})|+(t(Q)-2)(g-1)\\
 \ \ \ \ \geqslant g+2(2k-g)-2x+(t(Q)-2)(g-1)\\
 \ \ \ \ \geqslant 4k-g-2(x+1)+2+(t(Q)-2)(g-1)\\
 \ \ \ \ \geqslant 4k-g-2t(Q)+2+(t(Q)-2)(g-1)\\
 \ \ \ \ \geqslant 3k-2+(t(Q)-2)(g-3).
\end{array}\]
Since H is not Hamilton-connected, we have $g\geqslant3$. This implies $(t(Q)-2)(g-3)\geqslant0$. Therefore, we have $n_{1} \geqslant 3k - 2$, a contradiction.

From the above discussion, we have ${N_{C}}({q_1}) = {N_{C}}({q_g})=X$ and $t(Q)= x={d_{C}}({q_1})={d_{C}}({q_g})\geqslant k-\frac{g}2\geqslant\frac{k}2\geqslant3$. Then
\[\begin{array}{l}
n_{1} \geqslant \left| C \right| + \left| H \right| \geqslant (t(Q)-2)g+2+t(Q)+h\geqslant (x-2)g+2+x+g\\
 \ \ \ \ \geqslant (x-1)(g+1)+3\geqslant (k-\frac{g}2-1)(g+1)+3.
\end{array}\]

Since $f(g)=(k-\frac{g}2-1)(g+1)+3$ is a concave function of $g$, $3\leqslant g\leqslant k$, we have $f(3) = 4k-7 > 2k+3$ and $f(k)=\frac{k^{2}}2-\frac{k}2+2  > 2k+3 $ when $k\geqslant 6$. Hence $f(g)> 2k+3$, a contradiction.

\end{proof}
$\bf Subcase \  1.1. $ $h=k$.

If there is a $N_{C}(H)$-segment $C[c_{i}, c_{j}]$ such that $u, v, z, {w_1}\ $and$ \ {w_2} \notin \left\{ {{c_{i + 1}}, {c_{i + 2}},  \cdots , {c_{j - 1}}} \right\}$, we have $|C(c_{i}, c_{j})|\geqslant h$ and $ n_{1} \geqslant|H| +|C| \geqslant |H| +|C(c_{i}, c_{j})|+|\{v, w_{1}, w_{2}, c_{i}, c_{j}\}|\geqslant h+h+5=2k+5$, a contradiction. Thus, $|N_{C}(H)|=2$ and $v\in N_{C}(H)$. Let $X=N_{C}(H)$. Since $h=k$, for any $v_{i}\in H, \ i\in\{1 , \cdots, h \}$, we have $|{N_{C}}({v_i})| \geqslant 1$. So $e(H, X)\geqslant k$.
Because $|G_{1}-H-X|= n_{1}-k-2=2k+3-k-2=k+1$, we have
$$e(G_{1}-H-X) = e(G_{1}-H-X-\{w_{1}, w_{2}\})+e(G_{1}-H-X, \{w_{1}, w_{2}\})\leqslant\frac{(k-2)(k-1)}2+2.$$
Since $G$ is a $k$-regular graph, we have $$e(G_{1}-H-X, X) =k|G_{1}-H-X-\{w_{1}, w_{2}\}|+4-2e(G_{1}-H-X)-e(G_{1}-H-X, H).$$
Since $e(G_{1}-H-X, H)=0$, we have $$e(G_{1}-H-X, X) \geqslant k(k-1)+4-2(\frac{(k-2)(k-1)}2+2)=2k-2\geqslant k+1.$$
On the other hand $e(G_{1}-H-X, X)\leqslant e(G_{1}, X)-e(H, X)\leqslant 2k-k=k$, a contradiction.

$\bf Subcase \  1.2. $ $\frac{k+1}2 \leqslant h \leqslant k-1$.

Since $H$ is Hamilton-connected, it is easy to deduce that there exists a Hamiltonian path $Q$ in $H$ such that $Q$ satisfies $(*)$. By a similar proof to Lemma 6 in~\cite{Xia Li}, we have the following claim.

\begin{clm}\label{lem2}
There exists a Hamiltonian path $Q$ in $H$ such that $t(Q)\geqslant 3$.
\end{clm}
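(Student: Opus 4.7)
The plan is to argue by contradiction, following the template of Lemma~6 in~\cite{Xia Li}: assume that every Hamiltonian path $Q=q_{1}q_{2}\cdots q_{h}$ of $H$ has $t(Q)\leqslant 2$, and derive a violation of $n_{1}=2k+3$. Since $(*)$ already supplies a Hamiltonian path $Q_{0}$ with $t(Q_{0})\geqslant 2$, we may assume $t(Q_{0})=2$ exactly. Because $h\leqslant k-1$, every $v\in V(H)$ satisfies $d_{C}(v)\geqslant k-(h-1)\geqslant 2$. Because $C$ is a longest cycle of $G_{1}$ containing $w_{1},w_{2}$ and $H$ is Hamilton-connected (Claim~\ref{lem1}), any two vertices $c_{i},c_{j}\in N_{C}(H)$ with distinct attachments in $H$ have $|C(c_{i},c_{j})|\geqslant h-1$ in the shorter arc; otherwise, replacing that arc by a Hamiltonian path of $H$ between the two attachments yields a cycle of $G_{1}$ longer than $C$ and still containing $w_{1},w_{2}$.

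For the structural step, label each $c\in V(C)$ with respect to a Hamiltonian path $Q=q_{1}\cdots q_{h}$ of $H$ by $A$ if $c\in N_{C}(q_{1})\setminus N_{C}(q_{h})$, by $B$ if $c\in N_{C}(q_{h})\setminus N_{C}(q_{1})$, and by $W$ if $c\in N_{C}(q_{1})\cap N_{C}(q_{h})$. Reading the cyclic sequence of labels along $C$ (ignoring unlabeled vertices), $t(Q)$ equals the number of consecutive pairs that are \emph{neither} $AA$ \emph{nor} $BB$. Hence $t(Q)\leqslant 2$ forces the $A$-labels and the $B$-labels each to occupy a single maximal block, so $N_{C}(q_{1})$ and $N_{C}(q_{h})$ each lie in a single short arc of $C$. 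Now invoke Hamilton-connectedness: fixing one endpoint and letting the other range over $V(H)$ produces a family of Hamiltonian paths, each subject to the same clustering conclusion. Propagating the clustering through this family pins $N_{C}(V(H))$ inside two short arcs $I_{1},I_{2}$ of $C$, separated by two long arcs each of interior length at least $h-1$ (from the extremality observation above).

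A counting step then derives the contradiction. Combining the lower bound
\[
|C|\;\geqslant\;|I_{1}|+|I_{2}|+2(h-1)
\]
with the edge estimate $e(V(H),I_{1}\cup I_{2})\geqslant h(k-h+1)$ (which forces $|I_{1}|+|I_{2}|\geqslant k-h+1$), together with $h\geqslant (k+1)/2$, $k\geqslant 6$, and $n_{1}=|C|+|R|\geqslant |C|+h$, will push $n_{1}>2k+3$, contradicting the hypothesis. The main obstacle will be the endpoint-swap step: translating the per-path clustering into a \emph{global} clustering of $N_{C}(H)$ requires tracking how endpoint swaps reassign the roles of the $A$- and $B$-blocks. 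The condition $(*)$, which guarantees an $N_{C}(H)$-segment whose interior avoids $\{u,v,z,w_{1},w_{2}\}$, will be used to exclude degenerate placements of $I_{1}$ and $I_{2}$; together with the numerical margin afforded by $k\geqslant 6$, this should close the remaining cases.
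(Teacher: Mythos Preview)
Your overall strategy matches the paper's (which simply invokes Lemma~6 of~\cite{Xia Li}), so the template is right.  The execution, however, has a genuine gap in the counting step.  From your displayed inequality $|C|\geqslant |I_{1}|+|I_{2}|+2(h-1)$ together with $|I_{1}|+|I_{2}|\geqslant k-h+1$ you obtain only
\[
n_{1}\ \geqslant\ |C|+h\ \geqslant\ (k-h+1)+2(h-1)+h\ =\ k+2h-1\ \geqslant\ 2k,
\]
which does not contradict $n_{1}=2k+3$.  Two things go wrong.  First, you cannot claim that \emph{both} gap interiors have length $\geqslant h-1$: the longest-cycle replacement argument only applies to a gap whose interior avoids $w_{1},w_{2}$, and condition~$(*)$ guarantees just one such gap.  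Second, the bound $|I_{1}|+|I_{2}|\geqslant k-h+1$ coming from the crude edge count $e(H,I_{1}\cup I_{2})\geqslant h(k-h+1)$ is far too weak.

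The repair is to bound $|I_{1}|$ and $|I_{2}|$ separately via single-vertex insertion: between any two consecutive vertices of $N_{C}(q_{1})$ on $C$ there must be at least one interior vertex (otherwise insert $q_{1}$ to obtain a longer cycle still containing $w_{1},w_{2}$), so $|I_{1}|\geqslant 2|N_{C}(q_{1})|-1\geqslant 2(k-h+1)-1$, and likewise $|I_{2}|\geqslant 2(k-h+1)-1$.  Combining these with a single gap interior of size $\geqslant h$ gives
\[
n_{1}\ \geqslant\ |C|+h\ \geqslant\ 2\bigl(2(k-h+1)-1\bigr)+h+h\ =\ 4k-2h+2\ \geqslant\ 2k+4
\]
(using $h\leqslant k-1$), which is the desired contradiction.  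With this in hand the endpoint-swap propagation you flag as ``the main obstacle'' becomes unnecessary: the structure of a single Hamiltonian path $Q$ satisfying~$(*)$ with $t(Q)=2$ already suffices, and no global clustering of $N_{C}(H)$ is needed.  The residual case $N_{C}(q_{1})=N_{C}(q_{h})$ (all labels $W$, forcing $|N_{C}(q_{1})|=2$ and hence $h=k-1$) needs a short separate count.
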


Since $t(Q)\geqslant 3$, there is a  $C[c_{i}, c_{j}]$ such that $u, v, z, {w_1}\ $and$ \ {w_2} \notin \left\{ {{c_{i + 1}}, {c_{i + 2}},  \cdots , {c_{j - 1}}} \right\}$. Then there exists either ${c_{i}^{-}} \notin \{ {w_1}, {w_2}\}$ or ${c_{j}^{+}} \notin \{ {w_1}, {w_2}\}$. Without loss of generality, let ${c_{i}^{-}} \notin \{ {w_1}, {w_2}\}$. Since $C$ is a longest cycle of $G_{1}$ containing $w_{1}$ and $w_{2}$, we have ${N_{G_1}}({c_{i}^{-}}) \cap \left[ {H \cup {{c_{j - 1}}, {c_{j- 2}},  \cdots , {c_{j- h}}}\cup \left\{ {c_{i}^{-}} \right\}} \right] = \emptyset $. And there are at least two of $\{v, w_{1}, w_{2}\}$ which can not be adjacent to ${c_{i}^{-}}$. This implies ${d_{G_1}}({c_{i}^{-}}) \leqslant 2k + 3 - ( {h + h + 3} ) \leqslant k - 1$, a contradiction to ${d_{G_1}}({c_{i}^{-}}) = k$.

$\bf Subcase \  1.3. $ $2 \leqslant h \leqslant \frac{k}2$.

For any $v\in V(H)$, since $2 \leqslant h\leqslant \frac{k}2$, we have ${N_{C}}({v}) \geqslant k-h+1\geqslant k-\frac{k}2+1\geqslant4$. So $N_{C}(H)\geqslant k-h+1$. Let $a \in V(H)$ and ${N_{C}}({a})=A$.

 \begin{clm}\label{lem3}
For any $A$-segment $C [c_{i}, c_{j}]$ satisfying $C [c_{i}, c_{j}] \cap \{w_{1},w_{2}\}=\emptyset$, we have $C [c_{i}, c_{j}] \cap N_{C}((V(H)-a))\neq \emptyset$.
\end{clm}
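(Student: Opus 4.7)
The plan is to argue by contradiction. Suppose the claim fails: there is an $A$-segment $C[c_i, c_j]$ with $C[c_i, c_j] \cap \{w_1, w_2\} = \emptyset$ such that no vertex of $C[c_i, c_j]$ is adjacent (in $C$) to any vertex of $V(H)\setminus\{a\}$. Write $B_{seg} = C[c_j, c_i]$ for the complementary arc of $C$ (which must then contain $w_1, w_2$). Since $h \geq 2$, fix any $b \in V(H) \setminus \{a\}$; by Claim~\ref{lem1} ($H$ is Hamilton-connected), there is a Hamilton path $Q_H$ in $H$ from $a$ to $b$ using all $h$ vertices. The hypothesis forces $N_C(b) \subseteq V(B_{seg}) \setminus \{c_i, c_j\}$, and $d_H(b) \leq h-1$ together with $d_{G_1}(b) = k$ yields $d_C(b) \geq k - h + 1$.

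For each $c_p \in N_C(b)$ in $B_{seg}$ interior, I will form one of four candidate cycles combining the edges $c_i a$, $c_j a$, $b c_p$, the Hamilton path $Q_H$, a sub-arc of $B_{seg}$ through $c_p$, and (optionally) the reversed $A$-segment. A direct vertex count shows that $|C^*| > |C|$ precisely when $c_p$ lies within $h$ positions of either $c_i$ or $c_j$ along $B_{seg}$. Moreover, because $\{w_1, v, w_2\}$ forms a three-vertex consecutive sub-path of $B_{seg}$ (by construction of $G_1$) and $b$ cannot be adjacent to $w_1$ or $w_2$ (both have degree two, with neighbors only in $\{u, v, z\}$), one of the four candidates can be chosen so that its selected sub-arc of $B_{seg}$ contains both $w_1$ and $w_2$.

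Next I exploit the maximality of $C$: since $C$ is the longest cycle through $\{w_1, w_2\}$, no two neighbors of $b$ on $C$ can be consecutive (else $b$ would insert to extend $C$). Hence the $\geq k - h + 1$ neighbors of $b$ form an independent set in the path $V(B_{seg}) \setminus \{c_i, c_j\}$. Were every such neighbor to lie in the ``bad'' middle region of $B_{seg}$ (a single sub-path of length at most $|B_{seg}| - 2 - 2h = |C| - |A_{seg}| - 2h$), then the non-consecutive requirement would yield
\[ |C| - |A_{seg}| - 2h \geq 2(k - h + 1) - 1, \]
so $|C| \geq 2k + 1 + |A_{seg}| \geq 2k + 3$, contradicting $|C| \leq n_1 - h = 2k + 3 - h \leq 2k + 1$ (from $h \geq 2$). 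Therefore some $c_p$ is good, producing a cycle $C^*$ of $G_1$ through $\{w_1, w_2\}$ strictly longer than $C$, contradicting the choice of $C$.

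The hardest step I anticipate is the second paragraph: choosing among the four candidates so that both $w_1$ and $w_2$ lie on the selected sub-arc of $B_{seg}$. The block $\{w_1, v, w_2\}$ occupies three consecutive positions of $B_{seg}$, $b$ cannot be adjacent to $w_1$ or $w_2$, and $v$ is the only potentially obstructing neighbor of $b$ in the block; together with the hypothesis $C[c_i, c_j] \cap \{w_1, w_2\} = \emptyset$ (which constrains where $\{u, v, z\}$ can lie relative to $c_i, c_j$), a short case analysis should handle all configurations. The borderline case $|A_{seg}| = 2$ may require a supplementary cycle modification using the edge $c_i c_j$ directly to close the remaining gap.
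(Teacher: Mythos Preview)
Your approach differs substantially from the paper's. The paper never tries to build a longer cycle directly from a single $b\in V(H)\setminus\{a\}$. Instead it fixes the vertex $c_i^{+}$ (the successor of $c_i$, which lies \emph{inside} the $A$-segment and is therefore automatically distinct from $w_1,w_2$) and bounds its degree. Writing $S=N_C(V(H)\setminus\{a\})=\{c_{r_1},\dots,c_{r_s}\}$ with $c_{r_s}$ the $S$-vertex closest to $c_i$ on an arc avoiding $w_1,w_2$, the paper shows that $c_i^{+}$ is non-adjacent to $H$, to $\{w_1,w_2\}$, to $\{c_{r_s+1},\dots,c_{r_s+h}\}$, and to the first two vertices of every $S$-segment that avoids $\{w_1,w_2\}$; since $s\ge k-h+1$, this already forces $d_{G_1}(c_i^{+})\le 4<k$. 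Working with $c_i^{+}$ and with the \emph{whole} set $S$ (not a single $b$) is precisely what makes the $w_1,w_2$ bookkeeping trivial.

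Your proposal has a genuine gap at exactly the step you flag as hardest. The inequality in your third paragraph only guarantees some neighbour $c_p$ of $b$ within $h$ positions of $c_i$ or of $c_j$; it does \emph{not} guarantee that the corresponding rerouted cycle contains $w_1$ and $w_2$. Concretely: you name ``four candidate cycles'' but only two are actually described (enter $H$ at $c_i$ or at $c_j$, exit at $c_p$), and for $c_p=v$ both of these fail, since one omits $w_1=v^{-}$ and the other omits $w_2=v^{+}$. More seriously, if the block $u\,w_1\,v\,w_2\,z$ abuts $c_j$ (say $c_j=u$), then \emph{every} $c_p\in\{c_{j+1},\dots,c_{j+h}\}$ that is past the block gives an option-1 cycle missing $w_1,w_2$, while option~2 for the same $c_p$ is not longer than $C$ once $|C(c_j,c_i)|\ge h+4$. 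Your ``bad middle region'' of size $|B_{\mathrm{seg}}|-2-2h$ is therefore the wrong set: the \emph{effective} good region can shrink from $2h$ positions down to $h$. Redoing the count with this smaller region still gives a contradiction, but only if you also use $|A_{\mathrm{seg}}|\ge 3$ (true, since $a$ is not insertable between consecutive $A$-vertices) in place of the $|A_{\mathrm{seg}}|\ge 2$ you wrote; and you must further handle the symmetric case where the block abuts $c_i$, and the case where it sits strictly inside. None of this is the ``short case analysis'' you promise, and as written the argument is incomplete.
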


\begin{proof}
Let $N_{C}((V(H)-a))=S$. Suppose that there is an $A$-segment $C[c_{i}, c_{j}]$ satisfying $C[c_{i}, c_{j}] \cap [\{w_{1},w_{2}\}\cup S]=\emptyset$. Let $S=\left\{ {{c_{{r_1}}}, {c_{{r_2}}},  \cdots , {c_{{r_s}}}} \right\}$, where ${c_{{r_1}}}$ and ${c_{{r_s}}}$ are the closest vertices to $c_{j}$ and $c_{i}$ in $S$, respectively. Clearly, $|S|=s\geqslant k-h+1\geqslant 4$. Therefore, there is at least one segment of $C [c_{j}, c_{r_1}]$ and $C [c_{r_s}, c_{i}]$ which does not contain $w_{1}$ and $ w_{2}$. Without loss of generality, let $w_{1}, \ w_{2} \notin C [c_{r_s}, c_{i}]$. Obviously, we have ${N_{C}}(c_{i}^{+}) \cap \left[ {H \cup \left\{{ c_{r_s+1}, c_{r_s+2},\cdots, c_{r_s+h},{w_1}, {w_2}, {c_{i}^{+}}} \right\}} \right] = \emptyset $.  And for any $C [c_{r_i}, c_{r_{i+1}}] $ satisfying $C [c_{r_i}, c_{r_{i+1}}] \cap \{w_{1},\ w_{2}\}=\emptyset$ , $i\in\{1,2,\cdots,s-1\}$, we have ${N_{C}}(c_{i}^{+}) \cap \{ c_{r_i+1},c_{r_i+2}\} = \emptyset $.
This implies \[\begin{array}{l}
d_{G_{1}}({c_{i}^{+}})\leqslant 2k+3 -[ h + 2(s-2-1)+h+3]  \ \ \ \ \\
 \ \ \ \ \ \ \ \ \ \ \    \leqslant2k+3 -[ h + 2(k-h+1-2-1) + h+3]\leqslant 4,
\end{array}\]
a contradiction.
\end{proof}
By Claim~\ref{lem3}, we have
\begin{equation}\label{eq3}
   \left| C \right| \geqslant |A|+(|A|-2)h+2\geqslant (k-h+1)+(k-h+1-2)h+2
\end{equation}
and  $n_{1}\geqslant \left| C \right| + \left| H \right|\geqslant(k-h+1)+(k-h+1-2)h+2+h=k+3+(k-h-1)h$.

Put $g(h)=k+3+(k-h-1)h$. For $k\geqslant7$, since $g(h)$ is a concave function of $h$ with $g(2)= 3k-3> 2k+3$ and $g(\frac{k}2)=\frac{k^{2}}4+\frac{k}2+3> 2k+3$. Hence $g(h)> 2k+3$, a contradiction. When $k=6$ and $n_{1}=15$, there are two cases where $h=2$ and $h=3$ to consider as follows.

Case (a): $h=2$. By (\ref{eq3}), we have $13=n_{1}-h\geqslant|C|\geqslant (k-h+1)+(k-h+1-2)h+2=13$. So $R$ has only one component $H$ and $|C|=|A|+(|A|-2)h+2=13$. Since $C$ is a longest cycle of $G_{1}$ containing $w_{1}$ and $w_{2}$, we have $N_{C}(v_{i})=N_{C}(v_{j}), i\neq j$, for any $v_{i}\in H, i\in \{1, 2\}$. Let $N_{C}(H)=X$ and $Z=X^{+}\cup X^{-}$. Clearly, we have $u, v, z\in X$ and $e^{'}(Z)=0$, otherwise, there exists a longer cycle containing $w_{1}$ and $w_{2}$. Since $e(Z, H)=0$, $|X|=5$ and $|Z|=8$, we have $e(Z, X)=(6-1)\times6+4=34$. However, $e(Z, X)\leqslant k|X|-e(H, X)\leqslant20$, a contradiction.

Case (b): $h=3$. It is similar to case (a) above.

The following figure 2 shows the edges between $H$ and $C$ in the above two cases.

\begin{figure}[H]
  \centering
  \includegraphics[]{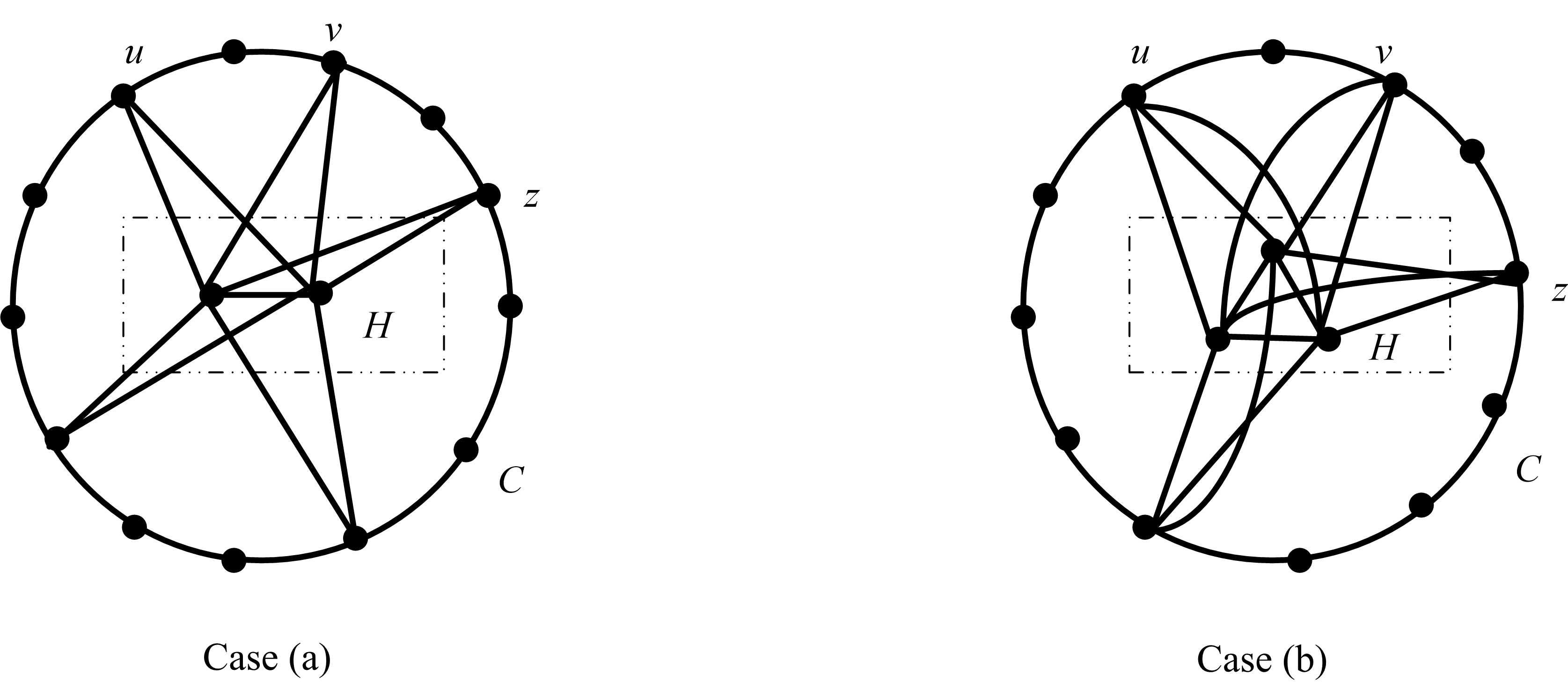}\\
  \caption{}
\end{figure}

$\bf Case \ 2. $ $h\geqslant k+1. $

By the assumption of connectivity and as $\{u, v, z\}$ is not a cut-set, there exists $x{''} \in {N_{C}}(H)$, such that $x^{''-} \notin \{w_{1}, w_{2}\}$. It is clear that ${N_{G_{1}}}({x^{''-}}) \cap {H} = \emptyset $, and at least two of $\{v, w_{1}, w_{2}\}$ cannot be adjacent to $x^{''-}$. It follows that $d_{G_{1}}(x^{''-}) \leqslant 2k + 3 -(k+1) - 2-1 = k-1$, a contradiction.

These contradictions complete our proof for $k\geqslant6$. We next discuss the case of $3\leqslant k\leqslant5$. Since $2k+1$ is odd, we only need to discuss the cases of $k=4$. When $k=4$, we have $n_{1}=11$. Since $|C|\geqslant6$, we consider the following four cases. It is worth noting that we have already proved the case when $R$ is an independent set in the preceding paragraph. Now, we first prove a simple claim.
\begin{clm}\label{lem4}
When $k=4$, if $R$ contains an edge $v_{0}v_{1}$ and $|C|\leqslant9$, then $N_{C}(v_{0})=N_{C}(v_{1})$.
\end{clm}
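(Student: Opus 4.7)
The plan is to assume for contradiction that $N_C(v_0)\neq N_C(v_1)$ and derive a contradiction via a longest-cycle rotation using the edge $v_0v_1$. I would choose $c_i\in N_C(v_0)$ and $c_j\in N_C(v_1)$ with $c_i\neq c_j$, let $\ell$ denote the length of the shorter of the two $c_i$--$c_j$ arcs on $C$, and write $I$ for the set of its $\ell-1$ interior vertices. The central construction is the rotated cycle
\[
C' \;=\; c_i\,v_0\,v_1\,c_j\,c_{j+1}\,\cdots\,c_{i-1}\,c_i,
\]
which uses the long arc from $c_j$ back to $c_i$, has length $|C|-\ell+3$, and vertex set $(V(C)\setminus I)\cup\{v_0,v_1\}$; it contains both $w_1,w_2$ exactly when $I\cap\{w_1,w_2\}=\emptyset$.

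I would then case-split on $\ell$. If $\ell=1$, then $I=\emptyset$ and $C'$ is strictly longer than $C$ and still contains $w_1,w_2$, contradicting the maximality of $|C|$. If $\ell=2$, the same contradiction follows unless the unique vertex of $I$ lies in $\{w_1,w_2\}$; since $N_{G_1}(w_1)=\{u,v\}$ and $N_{G_1}(w_2)=\{v,z\}$, that exception forces $\{c_i,c_j\}\in\{\{u,v\},\{v,z\}\}$, specifying adjacencies between $\{v_0,v_1\}$ and $\{u,v,z\}$ which I would rule out by switching $c_i$ to another neighbor of $v_0$ on $C$ (available because $d_{G_1}(v_0)=4$ and $v_0\sim v_1$). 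If $\ell=3$ and $I\cap\{w_1,w_2\}=\emptyset$, then $|C'|=|C|$ and $C'$ is another longest cycle through $w_1,w_2$: the edge $v_0v_1$ leaves $R$ and the two consecutive former interior vertices of $I$ enter $R$ as at most one new component, so a direct component count will show $R'=G_1-C'$ has strictly fewer components than $R$, contradicting the secondary minimality in the choice of $C$. Finally, for $\ell\geq 4$, both arcs between $c_i$ and $c_j$ have length at least $4$, forcing $|C|\geq 8$; combined with the bound $|N_C(v_0)|+|N_C(v_1)|\leq 2k-2=6$ (since $v_0\sim v_1$ and both have degree $k=4$) and the arc-distance lower bound of $4$ for every cross pair (obtained by repeating the rotation argument), I would show that $N_C(v_0)\cup N_C(v_1)$ cannot simultaneously be accommodated on $C$.

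The main obstacle is handling the exceptional cases where the interior set $I$ meets $\{w_1,w_2\}$ --- principally in $\ell=2$, but potentially also in $\ell=3$. In those exceptions the direct contradictions with maximality or with the component count fail, and the plan is to exploit the rigid adjacency structure $N_{G_1}(w_1)=\{u,v\}$ and $N_{G_1}(w_2)=\{v,z\}$, together with $G\notin\mathscr{H}$ and the 2-connectedness of $G$, to show that each such configuration either admits a non-exceptional cross pair (obtained by switching $c_i$ or $c_j$ to another neighbor of $v_0$ or $v_1$) that falls into the generic analysis, or is itself directly contradictory. A secondary subtlety is the component-count step in the $\ell=3$ case, where one must verify that the two new residue vertices from $I$ are either absorbed into an existing component of $R\setminus\{v_0,v_1\}$ or at worst replace exactly one old component, so that the net count strictly drops.
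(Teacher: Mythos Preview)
Your rotation-based plan diverges substantially from the paper's argument, and the $\ell=3$ branch contains a genuine error. You assert that after swapping $\{v_0,v_1\}$ for the two interior vertices $I$, ``the net count strictly drops.'' It does not: in the typical situation where $\{v_0,v_1\}$ is an entire component of $R$ (exactly the case in every application of the claim), removing it deletes one component, while the two adjacent vertices of $I$ form at most one new component of $R'$; the number of components is therefore \emph{at most equal}, not smaller. Since $|C'|=|C|$ in this branch, you obtain no contradiction with either extremal property of $C$. Your fallback (``absorbed into an existing component'') would require showing $I$ has a neighbour in $R\setminus\{v_0,v_1\}$, which you have no reason to expect. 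The $\ell=2$ exceptional sub-case and the $\ell\geq 4$ endgame are also only sketched; in particular, ``repeating the rotation argument'' to force arc-distance $\geq 4$ for \emph{every} cross pair presupposes that each such pair has already survived the $\ell\leq 3$ analysis, which is circular given the gap at $\ell=3$.

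The paper's proof avoids all of this by a one-line count. With $Q=v_0v_1$ and (in every use of the claim) $|N_C(v_0)|=|N_C(v_1)|=3$, the hypothesis $N_C(v_0)\neq N_C(v_1)$ forces $|N_C(v_0)\cup N_C(v_1)|\geq 4$ and $t(Q)\geq 4$. Each of the $t(Q)$ alternating segments $C[c_i,c_j]$ can be rerouted through $v_0v_1$, so by maximality of $|C|$ each has at least $g=2$ interior vertices \emph{except} for at most two segments whose interiors contain $w_1$ or $w_2$ (which themselves are never endpoints, since $d_{G_1}(w_i)=2$). Hence
\[
|C|\ \geq\ |N_C(\{v_0,v_1\})|+2\bigl(t(Q)-2\bigr)+2\ \geq\ 4+4+2\ =\ 10,
\]
contradicting $|C|\leq 9$. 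This global count is exactly what your case split is reaching for; it subsumes your $\ell\leq 2$ branches and renders the problematic $\ell=3$ component argument unnecessary.
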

\begin{proof}
By contradiction. Let $Q=v_{0}v_{1}$. Suppose $N_{C}(v_{0})\neq N_{C}(v_{1})$, and without loss of generality, $|N_{C}(v_{0})-N_{C}(v_{1})|=1$. Since $G$ is a 4-regular graph, we have $|N_{C}(v_{0})\cup N_{C}(v_{1})|=4$ and $t(Q)=4$. Since $C$ is a longest cycle of $G_{1}$ containing $w_{1}$ and $w_{2}$, we have$|C| \geqslant |N_{C}(v_{0})\cup N_{C}(v_{1})|+2(t(Q)-2)+2\geqslant 10$, a contradiction.
\end{proof}

Case (a): $|V(C)|=9$. Let $C=u w_{1} v w_{2} z x_{1} x_{2} x_{3} x_{4}$. Then $R$ is an edge $v_{0}v_{1}$. By Claim~\ref{lem4} , we have ${N_C}(v_{0})={N_C}(v_{1})$. By assumption and symmetry, we have ${N_C}(R)=\{v, z, x_{3}\}$,
${N_C}(R)=\{v, z, x_{4}\}$, or ${N_C}(R)=\{x_{1}, x_{4}, v\}$. If ${N_C}(R)=\{x_{1}, x_{4}, v\}$, we have $x_{2}u\notin E(G_{1})$, otherwise, there is a Hamiltonian cycle $C'= v_{0} x_{1} z w_{2} v w_{1} u x_{2} x_{3} x_{4} v_{1}$. So we have ${d_{G_{1}}}(x_{2})\leqslant3$, a contradiction. The proofs for ${N_C}(R)=\{v, z, x_{3}\}$ and ${N_C}(R)=\{v, z, x_{4}\}$ are similar.

Case (b): $|V(C)|=8$. Let $C=u w_{1} v w_{2} z x_{1} x_{2} x_{3}$.

Subcase (b1): $R$ consists of an edge $v_{0}v_{1}$ and an isolated vertex $v_{2}$. By Claim~\ref{lem4} , we have ${N_C}(v_{0})={N_C}(v_{1})$. By assumption and symmetry, we have ${N_C}(v_{0})={N_C}(v_{1})=\{v, z, x_{3}\}$. Since $v\in {N_C}(v_{2})$, we have ${d_{G_{1}}}(v)\geqslant5$, a contradiction.

Subcase (b2): $R$ contains only one connected component $H$ which has three vertices. Since $G$ is a 4-regular graph, we have  ${d_C}(H)\geqslant 6$ and $|{N_C}(H)|\geqslant 3$. By assumption and symmetry, we have $\{v, z, x_{2}\}\subseteq {N_C}(H)$, $\{v, z, x_{3}\}\subseteq {N_C}(H)$, $\{u, z, x_{2}\}\subseteq {N_C}(H)$ or $\{v, x_{1}, x_{3}\}\subseteq {N_C}(H)$. If $\{v, z, x_{2}\}\subseteq {N_C}(H)$, we have $x_{1}, x_{3}\notin {N_C}(H)$. We claim $x_{1}x_{3}\notin E(G_{1})$, otherwise, there is a longer cycle $C'= H z w_{2} v w_{1} u x_{3} x_{1} x_{2}$ containing $w_{1}$ and $w_{2}$. Since $G$ is a 4-regular graph, we have $v\in{N_{G_{1}}}(x_{1})$ and $v\in{N_{G_{1}}}(x_{3})$, ${d_{G_{1}}}(v)\geqslant5$, a contradiction. The proofs for the other three cases are similar.

Case (c): $|V(C)|=7$. Let $C=u w_{1} v w_{2} z x_{1} x_{2}$. Clearly, there cannot have isolated vertex in $R$.

Subcase (c1): $R$ consists of two edges $v_{0}v_{1}$ and $v_{2}v_{3}$  that are not in the same component. By Claim~\ref{lem4} , we have $v\in {N_C}(v_{i}), i=0,1,2,3$, implying ${d_{G_{1}}}(v)\geqslant6$, a contradiction.

Subcase (c2): $R$ contains only one connected component $H$ which has four vertices. Since $G$ is a 4-regular graph, we have  ${d_C}(H)\geqslant 4$ and $|{N_C}(H)|\geqslant 2$. By assumption and symmetry, we have $\{v, x_{1}\}\subseteq {N_C}(H)$ or $\{z, x_{2}\}\subseteq {N_C}(H)$. If $\{v, x_{1}\}\subseteq {N_C}(H)$, we have $z, x_{2}\notin {N_C}(H)$. Since $G$ is a 4-regular graph, we have $vx_{2}, zx_{2}\in E(G_{1})$. So $v$  can accept at most 1 edge from $H$. Then ${N_C}(H)\geqslant 3$ and $u\in {N_C}(H)$. There is a longer cycle $C'= H u w_{1} v w_{2} z x_{2} x_{1}$ containing $w_{1}$ and $w_{2}$, a contradiction. The proof for $\{z, x_{2}\}\subseteq {N_C}(H)$ is similar.

Case (d): $|V(C)|=6$. Let $C=u w_{1} v w_{2} z x_{1}$. Clearly, there is no component $H$ in $R$ such that $|H|=1$ or $|H|=2$. So $R$ contains only one connected component $H$ which has five vertices. Obviously, there exist two consecutive vertices of $\{u, z, x_{1}\}$ which are adjacent to $H$, a contradiction.

Thus, we complete the proof of Theorem~\ref{thm3}.
\section{Proof of Theorem~\ref{thm5}}
Let $G$  be a 3-connected, $k$-regular graph on $n\leqslant 3k-6$ vertices, and let $P=uvz$ be a path of  $G$ such that $\{u, v, z\}$ is not a cut-set. After the operation in section 2, we have $|G_{1}|=n_{1} \leqslant 3k-4$. From Theorem~\ref{thm2}, we have $3k-6\geqslant 2k+1$, so, $k\geqslant 7$. Suppose $G_{1}$ is not Hamiltonian. Let $C$ be a longest cycle of $G_{1}$ containing $w_{1}$ and $w_{2}$, such that the number of components of $R=G_{1}-C$ is as small as possible. Let $r=|R|$ and $C=c_{1} c_{2}  \cdots c_{n_{1}-r}$. The subscripts of $c_{i}$ will be reduced modulo $n_{1}-r$ throughout. Obviously, we have $|C|=n_{1}-r \geqslant 7$.

When $R$ is an independent set, by the same proof as in section 3, we get $n_{1} \geqslant 3k$, which contradicts $n_{1} \leqslant 3k - 4$.
Thus in the following proof, we assume that there exists a component $H$ in $R$ such that $|H| \geqslant 2$. For a path $Q = {q_1} {q_2}  \cdots  {q_g}, g \geqslant 2$, in $H$, let $t(Q)$ denote the number of $C[c_{i}, c_{j}]$  such that $c_{i}$ is joined to one of $q_{1}$ and $q_{g}$, $c_{j}$ is joined to the other, and $e(\left\{ {{q_1}, {q_g}} \right\}, \{{c_{i + 1}}, {c_{i + 2}},  \cdots , {c_{j - 1}}\}) = 0$. We say that $Q$ satisfies the condition $(*)$ if $t(Q)\geqslant 2$,  ${N_{C}}(\left\{ {{q_1}, {q_g}} \right\}) \not\subset
 \left\{ {u, v, z} \right\}$ and there is a  $C[c_{i}, c_{j}]$ such that $u, v, z, {w_1}\ $and$ \ {w_2} \notin \left\{ {{c_{i + 1}}, {c_{i + 2}},  \cdots , {c_{j - 1}}} \right\}$. Now, let $H$ be the largest component of $R$ and $h=|H|$. Consider the following three cases.

$\bf Case \  1. $ $2 \leqslant h \leqslant k$.

By the same proof as in claim~\ref{lem1}, if $H$ is not Hamilton-connected, we have $n_{1}\geqslant (k-\frac{g}2-1)(g+1)+3$.
 Since $f(g)=(k-\frac{g}2-1)(g+1)+3$ is a concave function of $g$, $3\leqslant g\leqslant k$, we have $f(3) = 4k-7 > 3k-4$ and $f(k)=\frac{k^{2}}2-\frac{k}2+2  > 3k-4 $ when $k\geqslant 7$. Hence $f(g)> 3k-4$, a contradiction. Then $H$ is Hamilton-connected.

Let $M=\{r_{i}x_{i}\in E(G_{1}):i\in \{1,\ 2,\cdots, \ m\}, \ r_{i}\in V(H), \ x_{i}\in V(C)\}$ be a maximum matching in $E_{G_{1}}(H, C)$ and $m=|M|$, then $m\geqslant3$ since $G$ is 3-connected. In order to prove case 1, we next consider the following two subcases.

$\bf Subcase \  1.1. $ $k-2 \leqslant h \leqslant k$.

We claim $m = 3$. Otherwise, suppose $m \geqslant 4$, we must have $ n_{1} \geqslant|H| +|C| \geqslant h+m+(m-2)h+2\geqslant h+2+2h+4 =3h+6 \geqslant 3(k-2)+6=3k$, a contradiction.

 Let $x_{1},\ x_{2},\ x_{3}$ be in this order on $C$. Since $\{u, v, z\}$ is not a cut-set, without loss of generality, let $x_{3}\notin \{ u,\ w_1,\ v,\  w_2,\ z\} $ and $x_{3}^{-}\notin \{{w_1, w_2}\}$, $x_{3}^{+}\notin \{{w_1, w_2}\}$. Then we have either $C[x_{3}, x_{1}]$ or $C[x_{2}, x_{3}]$ which does not contain $w_1$ and $w_2$. When ${w_1, w_2}\notin C[x_{3}, x_{1}]$, since $C$ is a longest cycle of $G_{1}$ containing $w_{1}$ and $w_{2}$, we have
\[{N_{C}}(x_{3}^{-}) \cap \left[ {H \cup \left\{ x_{1}^{-}, x_{1}^{-2} \cdots , x_{1}^{-h} \right\} \cup x_{3}^{-} } \right] = \emptyset .\]

 And there are at least two of $\{v, w_{1}, w_{2}\}$ which can not be adjacent to $x_{3}^{-}$. It follows that \[{d_{G_{1}}}(x_{3}^{-})\leqslant 3k - 4 - 2h - 3 \leqslant 3k - 4 - 2(k - 2) - 3 = k-3,\]  this contradicts ${d_{G_{1}}}(x_{3}^{-})=k$. When ${w_1, w_2}\notin C[x_{2}, x_{3}]$, we have
\[{N_{C}}(x_{3}^{+}) \cap \left[ {H \cup \left\{ x_{2}^{+}, x_{2}^{+2} \cdots , x_{2}^{+h} \right\} \cup x_{3}^{+} } \right] = \emptyset .\]

 And there are at least two of $\{v, w_{1}, w_{2}\}$ which can not be adjacent to $x_{3}^{+}$. It follows that \[{d_{G_{1}}}(x_{3}^{+})\leqslant 3k - 4 - 2h - 3  \leqslant 3k - 4 - 2(k - 2) - 3 = k-3,\] this contradicts ${d_{G_{1}}}(x_{3}^{+})=k$.

$\bf Subcase \  1.2. $ $2 \leqslant h \leqslant k-3$.

For any $v\in V(H)$, since $2 \leqslant h\leqslant k-3$, we have ${N_{C}}({v}) \geqslant k-h+1\geqslant4$. By a similar proof as in Case 1.3, we have $n_{1} \geqslant \left| C \right| + \left| H \right| \geqslant k+3+(k-h-1)h.$
Put $g(h)=k+3+(k-h-1)h$. Since $g(h)$ is a concave function of $h$ with $g(2)= 3k-3=g(k-3)$. Hence $g(h)> 3k-4$, a contradiction.

$\bf Case \  2. $ $k+1 \leqslant h \leqslant 2k-7$.

 Let $Q = {q_1} {q_2}  \cdots  {q_g}$ be a path in $H$, which is chosen as long as possible such that $Q$ satisfies the condition $(*)$ (note that we are assuming 3-connectedness and that $\{u, v, z\}$ is not a cut-set). Put $A = {N_{C}}({q_1})$ and $B = {N_{C}}({q_g})$.

 \begin{clm}\label{lem5}
  $2\leqslant g\leqslant k-8$.
\end{clm}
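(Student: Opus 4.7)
The inequality $g\geqslant 2$ is immediate: clause $(*)$ forces $t(Q)\geqslant 2$, which requires the endpoints $q_1$ and $q_g$ of $Q$ to supply two distinct $A$-$B$ arcs on $C$, so in particular $q_1\neq q_g$ and the path $Q$ has at least two vertices.

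For the upper bound $g\leqslant k-8$ I would argue by contradiction and assume $g\geqslant k-7$. The key geometric input is the standard longest-cycle exchange. Whenever $C[c_i,c_j]$ is a segment counted by $t(Q)$, with $c_i$ joined to (say) $q_1$ and $c_j$ joined to $q_g$, replacing the arc $C(c_i,c_j)$ by the path $c_i q_1 q_2\cdots q_g c_j$ yields another cycle still containing $w_1$ and $w_2$. Since $C$ is a longest such cycle, the open arc $C(c_i,c_j)$ must contain at least $g$ vertices. Because the $t(Q)\geqslant 2$ segments have pairwise disjoint interiors, this already gives $|C|\geqslant 2g$ plus the contribution of the boundary vertices in $A\cup B$.

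The plan is then to combine $|C|\geqslant 2g+c$ with $|R|\geqslant h\geqslant k+1$ (from the Case 2 hypothesis) and $n_1\leqslant 3k-4$, which forces $g\leqslant k-\tfrac{5+c}{2}$. Reaching $g\leqslant k-8$ thus requires extracting a constant $c\geqslant 11$, and I would obtain these extra vertices from two sources. First, the third clause of $(*)$ supplies a segment $C[c_i,c_j]$ whose interior avoids $\{u,v,z,w_1,w_2\}$; these five vertices therefore lie on $C$ outside that interior and contribute independently of the two $g$-interior counts. Second, since $Q$ is chosen maximal with respect to $(*)$, neither $q_1$ nor $q_g$ can be prolonged in $H$ by a neighbor whose extension preserves $(*)$, which together with 3-connectivity of $G$ (forcing $|M|\geqslant 3$, as exploited in Subcase 1.1) provides a third $A$- or $B$-vertex and a third bounded arc, pushing $|A\cup B|$ and the total segment count well above the naive bound of $2$.

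The main obstacle will be the careful bookkeeping needed to guarantee $c\geqslant 11$, since the vertex sets involved (the endpoints of the $t(Q)$-segments, the neighborhood $A\cup B$, the five vertices $u,v,z,w_1,w_2$, and the additional $C$-vertex coming from 3-connectivity) can overlap in nontrivial ways. I would handle this with a case analysis on whether $\{u,v,z\}\cap (A\cup B)=\emptyset$ and on the positions of $w_1$ and $w_2$ relative to the two $t(Q)$-segments, closely following the insertion-style bookkeeping used in the preceding subcases of the paper.
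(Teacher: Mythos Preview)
Your approach diverges from the paper's and, as written, contains a genuine gap. The paper does \emph{not} bound $|C|$ from below. Instead it runs a one-vertex degree count: pick the segment $C[c_i,c_j]$ furnished by $(*)$ whose interior avoids $\{u,v,z,w_1,w_2\}$, and (using that $\{u,v,z\}$ is not a cut-set) arrange $c_i^{-}\notin\{w_1,w_2\}$. A standard longest-cycle exchange then shows $c_i^{-}$ is adjacent to no vertex of $H$, to none of $c_{j-1},\dots,c_{j-g}$, and to at most one of $\{v,w_1,w_2\}$. Hence
\[
d_{G_1}(c_i^{-})\;\leqslant\; n_1-\bigl(h+g+1+2\bigr)\;\leqslant\;(3k-4)-(k+1)-g-3\;=\;2k-8-g,
\]
which is below $k$ as soon as $g\geqslant k-7$. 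That is the whole argument.

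Your counting plan, by contrast, needs $|C|\geqslant 2g+11$, and the two sources you name do not deliver this. First, the five vertices $u,w_1,v,w_2,z$ are \emph{consecutive} on $C$; when $t(Q)=2$ they can sit entirely inside the interior of the second segment, so they are already absorbed in one of the two ``$\geqslant g$'' counts and do not contribute independently. In that configuration you only get $|C|\geqslant 2g+2$, which yields $g\leqslant k-4$, not $g\leqslant k-8$. Second, $3$-connectivity gives a third matching edge $r_3x_3$ into $C$, but $r_3$ is an arbitrary vertex of $H$, not $q_1$ or $q_g$; this neither produces a third $t(Q)$-segment nor forces $|A\cup B|$ to grow, so it does not push the constant past $11$. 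The missing idea is precisely to trade the global $|C|$-count for the local degree of $c_i^{-}$, which automatically captures the full size of $H$ (at least $k+1$) together with the $g$ forbidden vertices near $c_j$ and the two dead vertices among $\{v,w_1,w_2\}$.
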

\begin{proof}
Suppose that $g\geqslant k-7$. By the definition of the condition $(*)$, there is a  $C[c_{i}, c_{j}]$ such that $u, v, z, {w_1}\ and \ {w_2} \notin \left\{ {{c_{i + 1}}, {c_{i + 2}},  \cdots , {c_{j - 1}}} \right\}$. By the assumption of 3-connectedness and as $\{u, v, z\}$ is not a cut-set, we have  either $c_{i}^{-} \notin \{w_{1}, w_{2}\}$ or $c_{j}^{+} \notin \{w_{1}, w_{2}\}$. Without loss of generality, let $c_{i}^{-} \notin \{w_{1}, w_{2}\}$. We have ${N_{C}}(c_{i}^{-}) \cap \{H \cup \{{ c_{i}^{-}, { c_{j-1},c_{j-2},\cdots,c_{j-g}}}\}\} = \emptyset $, and at least two of $\{v, w_{1}, w_{2}\}$ cannot be adjacent to $c_{i}^{-}$. Thus
\[{d_{G_{1}}}(c_{i}^{-}) \leqslant 3k-4 - \left[ {h + g+2+1} \right] \leqslant3k-4-(k+1)-(k-7)-2-1  \leqslant k - 1.\]
This contradicts that ${d_{G_{1}}}(c_{i}^{-})=k$.
\end{proof}

 \begin{clm}\label{lem6}
$Q$ is a maximal path in $H$ satisfying $(*)$.
\end{clm}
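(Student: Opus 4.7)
The plan is to argue by contradiction. Suppose $Q$ is not maximal in $H$ with respect to $(*)$; without loss of generality there is $q_0 \in V(H) \setminus V(Q)$ with $q_0 q_1 \in E(H)$, and the strictly longer path $Q' = q_0 q_1 q_2 \cdots q_g$ in $H$ must fail $(*)$ by maximality of $Q$. I would then dispose of the three possible failure modes of $(*)$ for $Q'$ separately.

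For the cleanest mode, suppose the third bullet of $(*)$ fails for $Q'$, i.e.\ every bracketing segment $C[c_i, c_j]$ for $Q'$ contains one of $\{u, v, z, w_1, w_2\}$ in its interior. Since $Q$ itself satisfies $(*)$, there is a bracketing segment $C[c_i, c_j]$ for $Q$ with interior disjoint from $\{u, v, z, w_1, w_2\}$. If $q_0$ has no neighbor in $C(c_i, c_j)$, then $C[c_i, c_j]$ remains a bracketing segment for $Q'$ with the same interior, contradicting the assumed failure. Otherwise, any neighbor of $q_0$ strictly inside $C(c_i, c_j)$ subdivides the arc into two shorter bracketing arcs for $Q'$, each with interior still disjoint from $\{u, v, z, w_1, w_2\}$, again a contradiction.

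For the remaining two modes, $t(Q') \leq 1$ or $N_C(\{q_0, q_g\}) \subseteq \{u, v, z\}$, I would reuse the degree-count template already established in the proof of Claim~\ref{lem5}: pick a vertex $c_i^-$ (or $c_j^+$) on $C$ bordering a suitable bracketing arc whose interior avoids $\{w_1, w_2\}$; because $C$ is a longest cycle containing $w_1, w_2$, such a vertex cannot be adjacent to $V(H)$, to the $g+1$ consecutive vertices on the opposite end of the arc, to itself, or to at least two of $\{v, w_1, w_2\}$. Using $n_1 \leq 3k-4$, $h \geq k+1$, and the bound $g \leq k-8$ supplied by Claim~\ref{lem5}, this should give $d_{G_1}(c_i^-) < k$, contradicting $k$-regularity.

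The main obstacle I anticipate is making the first step fully rigorous: the bracketing structure of $Q'$ is defined relative to $N_C(\{q_0, q_g\})$ rather than $N_C(\{q_1, q_g\})$, so new $C$-neighbors of $q_0$ can simultaneously shift the endpoints of ``bracketing'' arcs and subdivide them, and I must ensure that no such reshuffling destroys every surviving interior-avoiding arc inherited from $Q$. A careful case analysis on where each new $N_C(q_0)$-vertex lies (inside the good arc, at its boundary, or elsewhere on $C$) should suffice. Once this bookkeeping is handled, the two remaining failure modes follow the paper's standard degree-count template and should close with room to spare given the ranges of $h$ and $g$ dictated by Case~2 and Claim~\ref{lem5}.
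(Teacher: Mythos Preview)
Your degree-count plan for the two ``remaining modes'' does not close. The template from Claim~\ref{lem5} yields
\[
d_{G_1}(c_i^{-}) \;\leq\; n_1 - h - g - 3 \;\leq\; (3k-4) - (k+1) - g - 3 \;=\; 2k-8-g,
\]
and in Claim~\ref{lem5} this produced a contradiction precisely \emph{because} one was assuming $g\geq k-7$. Here you only know $g\leq k-8$; substituting that bound goes the wrong way, giving $2k-8-g\geq k$, so no contradiction. Gaining one extra forbidden vertex by using $Q'$ (length $g+1$) instead of $Q$ does not help either---and in fact you cannot legitimately forbid $g+1$ consecutive vertices at the far end, since the arc you are using is a bracketing arc for $Q$, so only $Q$ (not $Q'$) is insertable there. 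For small $g$ (say $g=2$, which is allowed) and $k\geq 11$ the count simply fails.

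Your treatment of the third failure mode has a related gap, which you flag but do not resolve: a good bracketing arc $C[c_i,c_j]$ for $Q$ has one endpoint, say $c_i$, in $N_C(q_1)$. For $Q'=q_0q_1\cdots q_g$ you need an endpoint in $N_C(q_0)$. If $q_0$ has no neighbour anywhere on $C[c_i,c_j]$, the arc is not inherited by $Q'$ at all---it is not that ``$C[c_i,c_j]$ remains a bracketing segment for $Q'$ with the same interior''; it ceases to be one. Your proposed case analysis on the location of $N_C(q_0)$-vertices does not cover this.

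The paper's argument works along a completely different axis: it exploits structure \emph{inside} $H$. Extending $Q$ to a maximal path $b_1\cdots b_s q_1\cdots q_g\cdots q_e$ in $H$, one first observes $|N_C(b_1)|\leq 3$ (otherwise a longer path satisfying $(*)$ would exist), hence $|N_H(b_1)|\geq k-3$. These many $H$-neighbours of $b_1$ are then used to reroute: depending on where they land along $Q'$, one either builds a $(*)$-path strictly longer than $Q$, or forces $g\geq k-2$ (contradicting Claim~\ref{lem5}), or forces $h\geq 2k-4$ (contradicting $h\leq 2k-7$). The leverage comes from the large $H$-degree of the extension endpoint, not from a $C$-vertex degree count; the latter is simply too weak when $g$ is small.
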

\begin{proof}
Suppose that $Q$ is not a maximal path in $H$.

Let $Q' ={b_1} {b_2}  \cdots  {b_s} {q_1} {q_2}  \cdots {q_g} {q_{g+1}}  \cdots {q_e}$ be a maximal path in $H$ containing $Q$. Without loss of generality, we assume $s \geqslant 1$. From the definition of $Q$, we have $N_{C}(b_{1})\leqslant3$, which implies $N_{H}({b_1}) \geqslant k-3$.

 When $N_{H}(b_{1})\cap\{q_{2},q_{3},\cdots,q_{e}\}\neq \emptyset$, one of the  following three cases occurs.
 \begin{enumerate}[(1).]
   \item $N_{H}(b_{1})\cap\{q_{g+1},q_{g+2},\cdots,q_{e}\}= \emptyset$.
   \item $N_{H}(b_{1})\cap\{q_{2},q_{3},\cdots,q_{g-1}\}= \emptyset$.
   \item $N_{H}(b_{1})\cap\{q_{g+1},q_{g+2},\cdots,q_{e}\}\neq \emptyset$ and $N_{H}(b_{1})\cap\{q_{2},q_{3},\cdots,q_{g-1}\}\neq \emptyset$.
 \end{enumerate}

In (1), set $i=min\{j\geqslant2: b_{1}q_{j}\in E(G_{1})\}$. Let $Q'' ={q_1} {b_s} {b_{s-1}} \cdots {b_1} {q_i} {q_{i+1}}  \cdots  {q_g}$. In (2), set $j=max\{d\geqslant g: b_{1}q_{d}\in E(G_{1})\}$. Let $Q'' ={q_1} {b_s} {b_{s-1}} \cdots {b_1} {q_j} {q_{j-1}}  \cdots {q_g}$. In both (1) and (2), since $N_{H}(b_{1})\cup\{b_{1}\} \subseteq Q''$, then $g\geqslant |V(Q'')|\geqslant k-2$, a contradiction to Claim \ref{lem5}.

In (3), set $$l_{1}=min\{j:2\leqslant j\leqslant g-1, b_{1}q_{j}\in E(G_{1})\},$$  $$l_{2}=max\{j: 2\leqslant j\leqslant g-1, b_{1}q_{j}\in E(G_{1})\},$$ and $$l_{3}=max\{j: b_{1}q_{j}\in E(G_{1})\}.$$ Let $Q^{*} ={q_1} {b_s} {b_{s-1}} \cdots {b_1} {q_{l_{1}}} {q_{l_{1}+1}} \cdots  {q_g}$ and $Q^{**} ={q_1} {q_2} \cdots {q_{l_{2}}} {b_1} {q_{l_{3}}} {q_{l_{3}-1}} \cdots {q_g}$. Because $g\geqslant max\{|V(Q^{*})|, |V(Q^{**})|\}$, we have $l_{1}-2\geqslant s$ and $g-1-l_{2}\geqslant l_{3}-g+1$, which implies $g\geqslant (l_{2}-l_{1}+1)+s+l_{3}-g+1+2$. So
\[\begin{array}{l}
g \geqslant 1+|\{b_1,\cdots, {b_s}\}\cup \{q_1, {q_g}\}\cup \{{q_{l_{1}}}, {q_{l_{1}+1}}, \cdots , {q_{l_{2}}}\}\cup \{{q_{g+1}}, \cdots , {q_{l_{3}}}\}| \\
\ \ \ \geqslant 1+|N_{H}(b_{1})\cup\{b_{1}\}|\geqslant k-1,
\end{array}\]a contradiction to Claim \ref{lem5}.

A similar argument holds if $N_{H}(q_{e})\cap\{b_{1},b_{2},\cdots,b_{s},q_{1},q_{2},\cdots,q_{g-1}\}\neq \emptyset$. Thus it is enough for completing our proof to discuss the following two cases.
\begin{enumerate}[(a).]
  \item $e>g,~N_{H}(b_{1})\subseteq \{b_{2},b_{3},\cdots,b_{s},q_{1}\} $ and $N_{H}(q_{e})\subseteq \{q_{g},q_{{g+1}},\cdots, q_{{e-1}}\} $
  \item $e=g,~N_{H}(b_{1})\subseteq \{b_{2},b_{3},\cdots,b_{s},q_{1}\} $ .
\end{enumerate}

In case (a), since $|N_{C}(b_{1})|\leqslant 3$, we have $|\{b_1,\cdots, {b_s}\},q_1|\geqslant |N_{H}(b_{1})\cup\{b_{1}\}|\geqslant k-2$. Similarly $|\{q_g,\cdots, {q_{e-1}}\},q_e|\geqslant |N_{H}(q_{e})\cup\{q_{e}\}|\geqslant k-2$. Then we have $|H|\geqslant |N_{H}(b_{1})\cup\{b_{1}\}|+|N_{H}(q_{e})\cup\{q_{e}\}|\geqslant 2k-4$. This contradicts $k+1 \leqslant h \leqslant 2k-7$.

In case (b), denote $q_{1}=b_{s+1}$. Let $i=max\{j:b_{1}b_{j}\in E(G_{1})\}$. We claim that $N_{C}(b_{l})=\{v\}$ and $N_{H}(b_{l})\cap \{q_{2},q_{3},\cdots,q_{g}\}=\emptyset$, for any $1\leqslant l\leqslant i-1$.
Otherwise, let $d=min\{j:j>l ~and ~b_{1}b_{j}\in E(G_{1})\}$.
 When $|N_{C}(b_{l})|\geqslant2$, either ${b_l} {b_{l-1}} \cdots {b_1} {b_d} {b_{d+1}} \cdots {b_s} {q_1}$ or ${b_l} {b_{l-1}} \cdots {b_1} {b_d} {b_{d+1}} \cdots {b_s} {q_1} {q_2} \cdots {q_g}$ is a path that satisfies $(*)$ and is longer than $Q$, a contradiction. When $q_{f}\in N_{H}(b_{l})$ for some $2\leqslant f\leqslant g$, then $${Q'''=q_1} {b_s} {b_{s-1}} \cdots {b_d} {b_1} {b_2} \cdots {b_{l}} {q_f} {q_{f+1}} \cdots {q_g}$$ is the path that satisfies $(*)$ and is longer than $Q$, a contradiction.

 Since $|Q'|\geqslant |N_{H}(b_{1})|+|N_{H}(q_{g})|+2-1$, we have
 \[\begin{array}{l}
n_{1}\geqslant |Q'|+|C| \\
 \ \ \ \ \geqslant |N_{H}(b_{1})|+k-|B|+1+|B|+(|B|-1+g) \\
 \ \ \ \ \geqslant|N_{H}(b_{1})|+k+|B|+g\\
 \ \ \ \ \geqslant 2k-1+|B|+g.
\end{array}\]
So $g\leqslant 3k-4-2k+1-|B|< k-|B|=|N_{H}(q_{g})|$. It follows that $N_{H}(q_{g})\cap \{{b_1}, {b_2},  \cdots , {b_s}\}\neq \emptyset $. Then $|N_{C}(q_{2})|\leqslant 3$ as there is a path in $H$ with at least $g + 1$ vertices connecting $q_{1}$ and $q_{2}$, a contradiction. Thus we have $|H|\geqslant |N_{H}(b_{1})|+|N_{H}(q_{2})|+1\geqslant k-1+k-3+1\geqslant 2k-3$, a contradiction.

\end{proof}

\begin{clm}\label{lem7}
 $t(Q)\geqslant3$.
\end{clm}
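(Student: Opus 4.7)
The plan is to argue by contradiction: assume $t(Q) \leq 2$, which combined with $(*)$ forces $t(Q) = 2$. Fix the transition segment $C[c_i, c_j]$ guaranteed by $(*)$, so that $u, v, z, w_1, w_2 \notin C(c_i, c_j)$, and after relabelling assume $c_i q_1, c_j q_g \in E(G_1)$. Let $C[c_s, c_t]$ denote the other transition segment; together with $C[c_i, c_j]$ it splits $C$ into a contiguous ``$q_1$-arc'' $\alpha$ and a contiguous ``$q_g$-arc'' $\beta$.

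By 3-connectedness and the hypothesis that $\{u,v,z\}$ is not a cut-set, at least one of $c_i^-, c_j^+$ lies outside $\{w_1, w_2\}$; assume WLOG $c_i^- \notin \{w_1, w_2\}$. Rerunning the P\'osa-type rotation from the proof of Claim~\ref{lem5}---splicing $Q$ into $C$ at $c_i$---yields that $c_i^-$ has no neighbour in $H$, is not adjacent to any $c_{j-\ell}$ for $1 \leq \ell \leq g$, and cannot be adjacent to at least two of $\{v, w_1, w_2\}$. Together with the exclusion of $c_i^-$ itself, this produces the raw bound $d_{G_1}(c_i^-) \leq 3k - 7 - h - g$, which already contradicts $d_{G_1}(c_i^-) = k$ whenever $h + g > 2k - 7$; in particular it closes every case in which either $h$ or $g$ is large.

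The difficult regime is $h$ close to $k+1$ and $g$ close to $2$ with $k \geq 10$, where the raw bound alone is too weak. Here I will exploit $t(Q) = 2$: for each $c_\ell \in \beta$ whose $c_\ell^-$ avoids $\{u, v, z, w_1, w_2\}$, the splice $c_i q_1 q_2 \cdots q_g c_\ell$ combined with the path from $c_\ell^-$ back to $c_i$ through $\beta \setminus \{c_\ell\}$, $C[c_s, c_t]$, and $\alpha$ yields, by the same rotation, a new forbidden neighbour of $c_i^-$, namely $c_\ell^-$. Because $t(Q) = 2$ forces $\beta$ to be contiguous of length roughly $|C| - |\alpha| - O(1)$, and the deficit $k - (3k - 7 - h - g) = h + g - 2k + 7$ we still need to absorb is controlled by $h \leq 2k - 7$, these extra exclusions bring $d_{G_1}(c_i^-)$ strictly below $k$, the desired contradiction.

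The main obstacle is bookkeeping: I must check that the newly forbidden positions $c_\ell^-$ are disjoint from those already counted in $H \cup \{c_{j-1}, \ldots, c_{j-g}\} \cup \{v, w_1, w_2, c_i^-\}$, and deal with the boundary cases where $c_\ell^-$ coincides with $\{u, v, z, w_1, w_2\}$ and must be discarded. If the argument from $c_i^-$ alone leaves a small residual deficit, applying the symmetric analysis to $c_j^+$ (when $c_j^+ \notin \{w_1, w_2\}$) or to the endpoints of the second transition segment $C[c_s, c_t]$, and then summing the resulting degree inequalities, should close the gap in the style of the case split used in the proof of Claim~\ref{lem6}.
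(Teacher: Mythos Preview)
Your proposal has a real gap in the ``difficult regime''. The splice $c_i q_1 \cdots q_g c_\ell$ you describe requires $q_g c_\ell \in E(G_1)$, so the extra forbidden neighbours $c_\ell^-$ of $c_i^-$ are indexed by $c_\ell \in B = N_C(q_g)$, not by arbitrary $c_\ell$ in the arc $\beta$. Thus the number of extra exclusions is at most $|B|$, and your remark that ``$t(Q)=2$ forces $\beta$ to be contiguous of length roughly $|C|-|\alpha|-O(1)$'' does not bound the count. Nothing in your outline forces $|B|$ to be large; the missing ingredient is Claim~\ref{lem6}: since $Q$ is a \emph{maximal} path in $H$, one has $N_H(q_g)\subseteq V(Q)$ and hence $|B|=k-|N_H(q_g)|\geq k-g+1$. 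With this in hand your approach does close: the forbidden set for $c_i^-$ has size at least $h+g+|B|-O(1)\geq (k+1)+(k+1)-O(1)$, giving $d_{G_1}(c_i^-)\leq (3k-4)-(2k+2)+O(1)<k$. But as written the proposal never invokes maximality, and the ``raw bound'' paragraph simply re-derives the inequality behind Claim~\ref{lem5}, which by construction gives no contradiction once $g\leq k-8$.

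The paper's argument is organised differently and avoids your residual bookkeeping entirely. It first records, from Claims~\ref{lem5} and~\ref{lem6}, that $|A|,|B|\geq 9$; under $t(Q)=2$ this forces $A$ and $B$ to occupy disjoint arcs of $C$. It then selects a vertex $c_z\in A^+$ (rather than the predecessor of a transition endpoint) and shows that for each $B$-segment $C[c_{j_f},c_{j_{f+1}}]$ avoiding $\{w_1,w_2\}$ the vertex $c_z$ is non-adjacent to \emph{both} $c_{j_f+1}$ and $c_{j_f+2}$. This yields
$d_{G_1}(c_z)\leq 3k-4-\bigl[h+g+2(|B|-3)+3\bigr]\leq 3k-1-2(g+|B|)\leq k-3$
in one stroke from $g+|B|\geq k+1$, with no deficit left to absorb and no symmetric second pass needed.
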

\begin{proof}
From Claim \ref{lem5} and Claim \ref{lem6}, we have $|A|\geqslant 9$ and $|B|\geqslant 9$. Suppose $t(Q)=2$. There is only one case: $A=\left\{ {{c_{{i_1}}}, {c_{{i_2}}},  \cdots , {c_{{i_s}}}} \right\}$ and $B=\left\{ {{c_{{j_1}}}, {c_{{j_2}}},  \cdots , {c_{{j_l}}}} \right\}$ such that $s\geqslant 9, l\geqslant 9$ and $\left\{ {{c_{{i_1}}}, {c_{{i_2}}},  \cdots , {c_{{i_s}}}} \right\} \cap \left\{ {{c_{{j_1}}}, {c_{{j_2}}},  \cdots , {c_{{j_l}}}} \right\} = \emptyset $.

There is at least one segment of $C [c_{j_{l}}, c_{i_1}]$ and $C [c_{i_s}, c_{j_{1}}]$ which does not contain $w_{1}, w_{2}$. Without loss of generality, let $w_{1}, w_{2} \notin C [c_{j_{l}}, c_{i_1}]$, then there exists some ${c_z} \in {A^ + }$ satisfying ${N_{C}}({c_z}) \cap \left[ {H \cup\left\{ {{w_1}, {w_2}, {c_{z}}}, c_{j_{l+1}},c_{j_{l+2}},\cdots,c_{j_{l+g}}\right\}} \right] = \emptyset $. And for any $C [c_{j_{f}}, c_{j_{f+1}}]$ such that $C [c_{j_{f}}, c_{j_{f+1}}]\cap \{w_{1}, w_{2}\}=\emptyset$, $f=1,2,\cdots,l-1$, we have ${N_{C}}({c_z}) \cap \{ c_{j_{f}+1}, c_{j_{f}+2} \} = \emptyset $. This implies \[{d_{G_{1}}}({c_z}) \leqslant 3k -4 - \left[ {h+g + 2(l - 1-2) + 3} \right] \leqslant3k-1-2(g+l)\leqslant k-3,\]
a contradiction(note that $g+l\geqslant k+1$).

\end{proof}

In fact, we have $t(Q)\geqslant4$. Otherwise, suppose $t(Q)=3$, We only have to consider one case: $A\cap B=\{c_{m}\}$, $A\setminus\{c_{m}\}=\left\{ {{c_{{i_1}}}, {c_{{i_2}}},  \cdots , {c_{{i_s}}}} \right\}$ and $B\setminus\{c_{m}\}=\left\{ {{c_{{j_1}}}, {c_{{j_2}}},  \cdots , {c_{{j_l}}}} \right\}$ such that $s\geqslant 8, l\geqslant 8$ and $\left\{ {{c_{{i_1}}}, {c_{{i_2}}},  \cdots , {c_{{i_s}}}} \right\} \cap \left\{ {{c_{{j_1}}}, {c_{{j_2}}},  \cdots , {c_{{j_l}}}} \right\} = \emptyset $. It is worth noting that in this case, $g+l\geqslant k$. By a similar proof as in Claim \ref{lem7}, we have that there exists some ${c_z} \in {A^ + }$ or ${c_z} \in {B^ + }$ such that ${d_{G_{1}}}({c_z}) \leqslant 3k -4 - \left[ {h+g + 2(l - 1-2) + 3} \right] \leqslant3k-1-2(g+l)\leqslant k-1$, a contradiction.
Since ${N_{C}}(\{q_1,\ q_g\})\cup ({N_{C}}(\{q_1,\ q_g\}))^{+}\cup H \subseteq V(G_{1}) $, we have
\[\begin{array}{l}
n_{1} \geqslant \left| C \right| + \left| H \right| \\
\ \ \ \ \geqslant \left| H \right|+2|{N_{C}}(\{q_1,\ q_g\})|+(t(Q)-2)(g-1)\\
 \ \ \ \ \geqslant h+2(k-g+1)+(t(Q)-2)(g-1)\\
\ \ \ \ \geqslant3k+1+(t(Q)-4)(g-1).
\end{array}\]
Because of $n_{1}\leqslant 3k-4$, we have $-5\geqslant(t(Q)-4)(g-1)$, a contradiction to $t(Q)\geqslant4$ and $g\geqslant2$.

$\bf Case \  3. $ $h \geqslant 2k-6$.

Since $G_{1}$ is a 3-connected graph and $\{u, v, z\}$ is not a cut-set, there exists a vertex $x{'} \in {N_{C}}(H)$, such that $x^{'-} \notin \{w_{1}, w_{2}\}$. It is clear that ${N_{G_{1}}}({x^{'-}}) \cap {H} = \emptyset $, and at least two of $\{v, w_{1}, w_{2}\}$ cannot be adjacent to $x^{'-}$. It follows that \[d_{G_{1}}(x^{'-}) \leqslant 3k-4-(2k-6)-2-1 =k-1,\] a contradiction.

Thus, we complete the proof of Theorem~\ref{thm5}.

\section{Conclusion}
In this paper, we characterize a class of graphs that illustrate the sharpness of the bound $2k$ in Theorem~\ref{thm2}. By excluding these particular graphs, we are able to enhance the result and establish that the bound is in fact $2k+1$ for 2-connected, $k$-regular graphs and $3k-6$ for 3-connected, $k$-regular graphs.
The problem of regular 3-connected $2$-path Hamiltonian graphs with $n$ vertices remains intriguing whenever $3k-5\leqslant n\leqslant 3k-1$. Naturally, we may inquire whether any interesting properties can be observed in regular $m$-connected graphs for $m\geqslant 4$? The resolution of the aforementioned question is necessarily relevant to the study of the Hamiltonicity and edge-Hamiltonicity of regular graphs. However, it should be noted that the existence of an $L$-graph (as illustrated in~\cite{Hao Li}) presents a counterexample, showcasing higher connectivity that prevents the realization of edge-Hamiltonicity in regular graphs. As a result, we establish $3k-1$ as an upper bound for achieving the optimum outcome.


\end{document}